\documentclass[preprint,12pt]{elsarticle}

\usepackage{natbib}

\usepackage{graphicx}%
\usepackage{multirow}%
\usepackage{amsmath,amssymb,amsfonts}%
\usepackage{amsthm}%
\usepackage{mathrsfs}%
\usepackage[title]{appendix}%
\usepackage{xcolor}%
\usepackage{textcomp}%
\usepackage{manyfoot}%
\usepackage{booktabs}%
\usepackage{algorithm}%
\usepackage{algorithmicx}%
\usepackage{algpseudocode}%
\usepackage{listings}%
\usepackage{caption}
\usepackage[utf8]{inputenc}
\usepackage{comment}

\journal{Journal of Combinatorial Optimization}

\begin{document}

\newcommand{\Z}{{\mathbb Z}}
\newcommand{\N}{{\mathbb N}}
\newcommand{\Hi}{{\mathbb H}}
\newcommand{\R}{{\mathbb R}}
\newcommand{\Q}{{\mathbb Q}}
\newcommand{\ICG}{\mathrm{ICG}}
\newcommand{\WCG}{\mathrm{WCG}}
\newcommand{\WICG}{\mathrm{WICG}}

\newtheorem{theorem}{\bf Theorem}[section]
\newtheorem{corollary}[theorem]{\bf Corollary}
\newtheorem{lemma}[theorem]{\bf Lemma}
\newtheorem{proposition}[theorem]{\bf Proposition}
\newtheorem{conjecture}[theorem]{\bf Conjecture}
\newtheorem{remark}[theorem]{\bf Remark}
\newtheorem{problem}[theorem]{\bf Problem}
\newtheorem{definition}[theorem]{\bf Definition}

\newcommand{\QED} {\hfill$\square$}

\def\slika #1{\begin{center} \epsffile{#1} \end{center}}

\begin{frontmatter}

\title{On the maximum $\sigma$-irregularity of trees with given order and maximum degree}

\author[address1]{Milan Ba\v si\'c\corref{mycorrespondingauthor}}
\cortext[mycorrespondingauthor]{Corresponding author}
\ead{basic\_milan@yahoo.com}
\address[address1]{University of Ni\v s, Ni\v s, Serbia}

\begin{abstract}
The $\sigma$-irregularity index of a graph is defined as the sum of squared degree
differences over all edges and provides a sensitive measure of structural
heterogeneity.
In this paper, we study the problem of maximizing $\sigma(T)$ among all trees of
fixed order $n$ and prescribed maximum degree $\Delta\ge4$.
By expressing the problem in terms of edge--degree multiplicities, we derive a
linear programming formulation and analyze its dual.
This approach yields sharp upper bounds for $\sigma(T)$ and leads to a detailed
description of extremal degree--pair distributions.
We show that the extremal problem can be completely resolved for the congruence
classes $n\equiv1\pmod{\Delta}$ and $n\equiv0\pmod{\Delta}$.
When $n\equiv1\pmod{\Delta}$, the linear program admits an integral optimal
solution, and the bound for $\sigma(T)$ is tight.
When $n\equiv0\pmod{\Delta}$, the linear relaxation is not attainable by any
tree; nevertheless, by introducing a penalty function derived from dual slack
variables, we determine the exact maximum value of $\sigma(T)$.
In both cases, all extremal trees are characterized explicitly and consist
exclusively of vertices of degrees $1$, $2$, and $\Delta$, with edges incident to
$\Delta$-vertices playing a dominant role.
\end{abstract}

\begin{keyword}
igma-irregularity index; tree; maximum degree; extremal graph theory; degree-based indices; linear programming
\MSC 05C09 \sep 90C05 \sep 11A07 \sep 	90C27 
\end{keyword}

\end{frontmatter}



\section{Introduction}

A graph is called \emph{irregular} if not all of its vertices have the same degree.
Quantifying the extent to which a graph deviates from regularity has long been of
interest in both theoretical and applied graph theory.
One of the earliest quantitative measures of degree heterogeneity is the
\emph{degree variance} proposed by Snijders~\cite{Snijders1981}.
Subsequently, Albertson~\cite{Albertson1997} introduced the now classical
irregularity index
\[
\mathrm{irr}(G)=\sum_{uv\in E(G)} |d_G(u)-d_G(v)|,
\]
which measures the total imbalance of degrees across adjacent vertices.
This invariant, commonly referred to as the \emph{Albertson irregularity},
has been widely studied; see, for instance,
\cite{Albertson1997,Abdo2014,Hansen2005} and the references therein.

Degree–based irregularity measures have proved particularly useful in applications.
In network science, they serve as indicators of structural heterogeneity in
communication and social networks~\cite{Estrada2010}.
In mathematical chemistry, such indices are employed as molecular descriptors
in quantitative structure–property and structure–activity relationships
(QSPR/QSAR), where they capture variations in local bonding environments
\cite{Gutman2005,Reti2018QSPR}.
These applications have motivated the introduction of alternative irregularity
indices that are more sensitive to large local degree differences.

Among these variants, a prominent role is played by the
\emph{$\sigma$-irregularity index}, or simply the \emph{$\sigma$-index}, defined
for a graph $G=(V,E)$ by
\[
\sigma(G)=\sum_{uv\in E(G)}\bigl(d_G(u)-d_G(v)\bigr)^2.
\]
This index may be viewed as a quadratic analogue of Albertson’s irregularity.
By squaring degree differences, $\sigma(G)$ places greater weight on edges
joining vertices with highly disparate degrees, making it a particularly
sensitive measure of structural imbalance.
The $\sigma$-index was systematically investigated in
\cite{Abdo2018}, where extremal graphs maximizing $\sigma(G)$ among all graphs of
fixed order were characterized.
Further fundamental properties, including the solution of the inverse
$\sigma$-index problem, were obtained in \cite{Gutman2018}.
Additional comparisons with other irregularity measures and applications appear
in \cite{Reti2019}, while Arif et al.~\cite{Arif2023} studied $\sigma(G)$ and
related indices for graph families characterized by two main eigenvalues and
applied the resulting values in QSPR analysis.

A natural and active direction of research concerns extremal problems for
$\sigma(G)$ under additional structural constraints.
Once the extremal graphs of fixed order are known, it is natural to ask how the
problem behaves when the admissible class of graphs is restricted.
Among the most important such restrictions is the class of trees, which is both
mathematically fundamental and highly relevant in applications.
In chemical graph theory, trees model acyclic molecular structures; in this
context, one often speaks of \emph{chemical trees} when vertex degrees are
bounded by~$4$.
The $\sigma$-irregularity of chemical trees was studied in
\cite{Kovijanic2024}, where a complete characterization of trees with maximum
$\sigma$ among all chemical trees of fixed order was obtained.

More recently, Dimitrov \emph{et al.}~\cite{Dimitrov2026} initiated a systematic
study of $\sigma$-extremal trees under a prescribed maximum degree $\Delta=5$.
They established general structural properties of $\sigma$-maximal trees for
arbitrary $\Delta\ge3$ and carried out a detailed analysis for the case
$\Delta=5$.
In that setting, they showed that every $n$-vertex tree maximizing $\sigma(G)$
with $\Delta(G)=5$ contains only vertices of degrees $1$, $2$, and~$5$.
Moreover, almost all edges in such extremal trees join a leaf to a degree-$5$
vertex or a degree-$5$ vertex to a degree-$2$ vertex, while edges between two
degree-$5$ vertices or two degree-$2$ vertices occur only exceptionally.
These findings support a general heuristic: for fixed maximum degree $\Delta$,
trees with large $\sigma$-irregularity tend to exhibit strongly polarized degree
distributions, with many vertices of degree $\Delta$ and many leaves, while
vertices of degree~$2$ appear in comparable number and serve primarily as
connectors between the two extremes.

In this paper, we study the problem of maximizing the $\sigma$-irregularity index
within the class of trees of fixed order $n$ and prescribed maximum degree
$\Delta\ge4$.
The problem appears difficult in full generality, as the extremal structure
depends sensitively on arithmetic relations between $n$ and $\Delta$.
To address this, we reformulate the extremal problem as a linear optimization
problem over edge–degree multiplicities and analyze its dual.
This approach yields sharp upper bounds for $\sigma(T)$ and allows one to derive
the extremal trees explicitly in terms of their degree distributions and
edge--degree multiplicities.

Our main results show that the extremal problem can be completely resolved for
the congruence classes $n\equiv1\pmod{\Delta}$ and $n\equiv0\pmod{\Delta}$.
When $n\equiv1\pmod{\Delta}$, the linear program admits an integral optimal
solution, and the resulting bound for $\sigma(T)$ is tight and attained by a
unique extremal degree–pair configuration.
When $n\equiv0\pmod{\Delta}$, the linear relaxation is not attainable by any tree.
Nevertheless, by introducing an appropriate penalty function derived from the
dual slack variables, we determine the exact maximum value of $\sigma(T)$ and
obtain a complete structural characterization of all extremal trees in this
case. In both congruence classes treated here, the extremal trees consist exclusively
of vertices of degrees $1$, $2$, and $\Delta$, with edges of types $(1,\Delta)$
and $(2,\Delta)$ playing a dominant role.
Preliminary evidence, supported by exhaustive computer search for larger values
of $\Delta$, suggests that for increasing $\Delta$ new extremal configurations
may appear that involve additional intermediate degrees, reflecting a gradual
increase of the minimum penalty value as the residue of $n$ modulo $\Delta$
moves away from $0$ and $1$.

The paper is organized as follows.
In Section~2 we introduce notation and derive a linear programming formulation
for the maximization of $\sigma(T)$ over trees with given order and maximum
degree.
In Section~3 we solve the corresponding linear program and identify the optimal degree and
edge degree–pair distributions, discussing the role of integrality and congruence
conditions on~$n$.
In Section~4 we introduce a penalty function and analyze the case
$n\equiv0\pmod{\Delta}$, establishing sharp bounds and uniqueness results.
Finally, in Subsection~4.1 we provide explicit constructions of all extremal trees for
both congruence classes $n\equiv1\pmod{\Delta}$ and $n\equiv0\pmod{\Delta}$,
thereby complementing the abstract optimization results with concrete
combinatorial realizations.

\section{Linear programming formulation of the problem}

Let $T$ be a tree on $n$ vertices with maximum degree $\Delta$.  
For $1\le i\le \Delta$, denote by $n_i$ the number of vertices of degree $i$, and for
$1\le i\le j\le \Delta$, denote by $m_{i,j}$ the number of edges joining a vertex
of degree $i$ to a vertex of degree $j$.  
Thus,
\[
\sum_{1\le i\le j\le \Delta} m_{i,j} = |E(T)| = n-1,
\]
and the quantities $m_{i,j}$ capture the complete degree–pair structure of $T$.

\begin{definition}
\label{def:sigma}
For a connected graph $G$ with edge set $E(G)$, the \emph{sigma-regularity index}
is defined by
\begin{equation}
\label{eq:def_sigma}
\sigma(G)\;=\;\sum_{uv\in E(G)}\bigl(d_G(u)-d_G(v)\bigr)^2.
\end{equation}
\end{definition}

The expression for $\sigma(G)$ can be restated as follows. For $1\le i\le j\le \Delta$,
let $m_{i,j}$ denote the number of edges joining a vertex of degree $i$ to a vertex
of degree $j$. Then
\[
\sigma(G)
=\sum_{1\le i\le j\le \Delta} m_{i,j}\,(i-j)^2.
\]

\medskip
The variables $n_i$ and $m_{i,j}$ are related by the following system of identities, which hold for every tree $T$:

\begin{align}
n_1 + n_2 + \cdots + n_{\Delta} &= n, \label{eq:tree-sum-ni}\\
n_1 + 2n_2 + \cdots + \Delta n_{\Delta} &= 2n-2, \label{eq:tree-sum-deg}\\
m_{12} + m_{13} + \cdots + m_{1\Delta} &= n_1, \label{eq:tree-ni-1}\\
m_{12} + 2m_{22} + m_{23} + \cdots + m_{2\Delta} &= 2n_2, \label{eq:tree-ni-2}\\
m_{13} + m_{23} + 2m_{33} + \cdots + m_{3\Delta} &= 3n_3, \label{eq:tree-ni-3}\\
\intertext{\centering$\vdots$}
m_{1i}+m_{2i}+\cdots+m_{i-1,i}+2m_{ii}+m_{i,i+1}+\cdots+m_{i\Delta}
&= i n_i, \label{eq:tree-ni-i}\\
\intertext{\centering$\vdots$}
m_{1\Delta} + m_{2\Delta} + m_{3\Delta} + \cdots + 2m_{\Delta\Delta}
&= \Delta n_\Delta. \label{eq:tree-ni-Delta}
\end{align}

Equations \eqref{eq:tree-ni-1}--\eqref{eq:tree-ni-Delta} express each $n_i$ in terms of the edge–degree incidence numbers $m_{i,j}$.
Substituting these expressions into \eqref{eq:tree-sum-ni} and \eqref{eq:tree-sum-deg} yields two linear constraints involving only the variables $m_{i,j}$.  
A straightforward computation shows that \eqref{eq:tree-sum-deg} becomes
\begin{equation}
\sum_{1\le i\le j\le \Delta} m_{i,j} = n-1,
\label{eq:LP-constraint-sum}
\end{equation}
reflecting the fact that every tree has exactly $n-1$ edges, while 
\eqref{eq:tree-sum-ni} transforms into
\begin{equation}
\sum_{1\le i\le j\le \Delta}
\Bigl(\frac{1}{i}+\frac{1}{j}\Bigr)\, m_{i,j} = n.
\label{eq:LP-constraint-w}
\end{equation}
These two constraints encapsulate all global restrictions that the tree structure imposes on the family $\{m_{i,j}\}$.

\medskip

Consequently, maximizing $\sigma(T)$ over all trees of order $n$ and maximum degree $\Delta$ becomes equivalent to maximizing the linear functional
\[
\sum_{1\le i\le j\le \Delta} (i-j)^2 m_{i,j}
\]
over all nonnegative solutions $\{m_{i,j}\}_{1\le i\le j\le \Delta}$ satisfying 
\eqref{eq:LP-constraint-sum} and \eqref{eq:LP-constraint-w}.
This is precisely the linear program studied in Theorem~\ref{thm:SO-LP-1-Delta}, whose solution characterizes the extremal degree–pair distributions of trees maximizing~$\sigma$.
Thus, maximizing $\sigma(T)$ over all trees of order $n$ and maximum degree
$\Delta$ is equivalent to solving the linear program
\eqref{eq:primal-objective}--\eqref{eq:primal-nonneg}.



\section{Solution of the linear program}

We begin by determining the optimal solution of the linear relaxation,
without imposing integrality conditions on the variables $m_{i,j}$.

\begin{remark}
We shall use the following standard facts from linear programming.
If a primal linear program and its dual admit feasible solutions whose
objective values coincide, then both solutions are optimal (strong duality).
Moreover, in an optimal primal--dual pair, a primal variable can be positive
only if the corresponding dual constraint is tight (complementary slackness).
\end{remark}

\begin{theorem}
\label{thm:SO-LP-1-Delta}
Let $\Delta\ge 4$ and $n\ge \Delta+2$ be integers. Consider the linear program
\begin{align}
\max\ & \sigma(T) \;=\; \sum_{1\le i\le j\le \Delta} (i-j)^2\, m_{i,j},
\label{eq:primal-objective}\\[1ex]
\text{subject to }&
\sum_{1\le i\le j\le \Delta} \Bigl(\frac{1}{i}+\frac{1}{j}\Bigr) m_{i,j} \;=\; n,
\label{eq:primal-constraint-w}\\
&
\sum_{1\le i\le j\le \Delta} m_{i,j} \;=\; n-1,
\label{eq:primal-constraint-sum}\\
&
m_{i,j}\;\ge\; 0
\quad\text{for all }1\le i\le j\le \Delta.
\label{eq:primal-nonneg}
\end{align}
Then the problem admits a unique optimal solution in which
\[
m_{1,\Delta} \;=\; \frac{(\Delta-2)n + (\Delta+2)}{\Delta}, 
\qquad
m_{2,\Delta} \;=\; \frac{2(n-\Delta-1)}{\Delta},
\]
and
\[
m_{i,j}=0 \quad\text{for all } (i,j)\notin\{(1,\Delta),(2,\Delta)\}.
\]
\end{theorem}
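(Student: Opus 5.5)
The plan is to use LP duality, as announced in the Remark: write down the dual, find a dual feasible solution whose value equals that of the proposed primal solution, and get uniqueness from complementary slackness with strictly positive slacks.

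\textbf{Step 1: primal feasibility.} Check that the stated $m_{1,\Delta},m_{2,\Delta}$ satisfy \eqref{eq:primal-constraint-w} and \eqref{eq:primal-constraint-sum}. This is a $2\times2$ linear system with coefficient rows $(1+\tfrac1\Delta,\ \tfrac12+\tfrac1\Delta)$ and $(1,1)$. It is nonsingular and its solution is exactly the stated pair. Nonnegativity of $m_{2,\Delta}$ holds since $n\ge\Delta+2$, and $m_{1,\Delta}>0$ trivially. Its objective value is $(\Delta-1)^2m_{1,\Delta}+(\Delta-2)^2m_{2,\Delta}$.

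\textbf{Step 2: the dual and its candidate solution.} The dual has free variables $y$ (for \eqref{eq:primal-constraint-w}) and $z$ (for \eqref{eq:primal-constraint-sum}). It minimizes $ny+(n-1)z$ subject to
\[
s_{i,j}:=y\Bigl(\tfrac1i+\tfrac1j\Bigr)+z-(i-j)^2\ \ge 0,\qquad 1\le i\le j\le\Delta .
\]
Complementary slackness suggests making the constraints for $(1,\Delta)$ and $(2,\Delta)$ tight. Subtracting the two tight equations gives $y/2=(\Delta-1)^2-(\Delta-2)^2=2\Delta-3$, so
\[
y=4\Delta-6,\qquad z=(\Delta-2)^2-\frac{(2\Delta-3)(\Delta+2)}{\Delta}.
\]
With these values, $ny+(n-1)z$ equals the primal value by construction. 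This is because the primal value is $\sum m_{i,j}(y(\tfrac1i+\tfrac1j)+z)$ over the support, which by the constraints equals $ny+(n-1)z$. So once dual feasibility is shown, both solutions are optimal.

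\textbf{Step 3: dual feasibility with strict slack (main obstacle).} I must show $s_{i,j}>0$ for every $(i,j)\notin\{(1,\Delta),(2,\Delta)\}$ when $\Delta\ge4$. I would organize this by fixing $i$ and studying $\phi_i(j)=y/j-(j-i)^2$ for $j\in[i,\Delta]$.
\begin{itemize}
\item The term $y/j$ is convex in $j$ and $-(j-i)^2$ is concave, and for $j\ge3$ (roughly, when $j^3>y$) the concave part dominates. So the minimum of $\phi_i$ over $[i,\Delta]$ is attained at an endpoint $j=i$ or $j=\Delta$, after separately checking the few small $j$ where convexity might prevail.
\item At $j=\Delta$: compare $s_{i,\Delta}$ with the tight values $s_{1,\Delta}=s_{2,\Delta}=0$. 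As a function of $i$, $s_{i,\Delta}=y/i+\mathrm{const}-(\Delta-i)^2$. The difference $s_{i,\Delta}-s_{2,\Delta}$ factors through $(i-2)$ and should be positive for $3\le i\le\Delta$ once $y=4\Delta-6$ is substituted. The explicit check is that the function $y/i-(\Delta-i)^2$ is below its chord value at $i=1,2$; a short computation gives a quadratic-in-$i$ sign condition.
\item At $j=i$: $s_{i,i}=2y/i+z$, which is decreasing in $i$, so it suffices to check $s_{\Delta,\Delta}=2y/\Delta+z>0$. This reduces to a polynomial inequality in $\Delta$ that should hold for $\Delta\ge4$. The threshold $\Delta\ge4$ presumably enters exactly in these boundary checks, such as $(\Delta,\Delta)$ and $(1,\Delta-1)$.
\end{itemize}
I expect this sign bookkeeping, especially the small cases near $i=1,2$ and $j$ small, to be the hardest part. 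I would first try to express each $s_{i,j}$ as a product of explicit positive factors after clearing the denominator $ij\Delta$.

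\textbf{Step 4: uniqueness.} By complementary slackness with the dual optimum of Step 2, every primal optimum has $m_{i,j}=0$ wherever $s_{i,j}>0$, that is, outside $\{(1,\Delta),(2,\Delta)\}$. The nonsingular $2\times2$ system from Step 1 then forces the stated values, so the optimal solution is unique.
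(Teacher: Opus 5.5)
Your plan agrees with the paper everywhere except Step~3. You use the same dual certificate: your $z$ simplifies to the paper's $\mu=\Delta^2-6\Delta+3+6/\Delta$. You get equality of objective values in the same way, and uniqueness from the same complementary-slackness argument.

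For dual feasibility, the paper argues row by row. It factors $\Delta j\,F(1,j)$ and $\Delta j\,F(2,j)$ as $(\Delta-j)$ times a quadratic that is increasing on $[1,\infty)$. For $3\le i\le j\le\Delta$ it uses the crude bounds $1/i,1/j\ge 1/\Delta$ and $(i-j)^2\le(\Delta-3)^2$, which give $F(i,j)\ge 2-6/\Delta$.

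Your reduction, for fixed $i$, to the endpoints in $j$ reaches a true conclusion, but the concavity argument does not support it. We have $\phi_i''(j)=2y/j^3-2<0$ only when $j^3>4\Delta-6$. This already fails at $j=3$ once $\Delta\ge 9$. The ``few small $j$'' you would check separately number about $(4\Delta)^{1/3}$, so they are not a bounded finite check.

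Use monotonicity instead of concavity. The LP only involves pairs with $j\ge i$, and on $[i,\Delta]$ the term $y/j$ strictly decreases while $-(j-i)^2$ does not increase. Hence $s_{i,j}>s_{i,\Delta}$ for $i\le j<\Delta$. Everything then reduces to the column $j=\Delta$, where the computation you outline gives exactly
\[
i\,s_{i,\Delta}=(i-1)(i-2)(2\Delta-3-i).
\]
This is nonnegative for $1\le i\le\Delta$ and vanishes only for $i\in\{1,2\}$, because $2\Delta-3-i\ge\Delta-3>0$. That is precisely where $\Delta\ge4$ enters. Your separate check of $s_{i,i}$ becomes redundant, since $(\Delta,\Delta)$ is the case $i=\Delta$ of this identity.

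Watch the direction in your chord remark. Since $s_{1,\Delta}=s_{2,\Delta}$, the chord is horizontal, and $y/i-(\Delta-i)^2$ must lie \emph{above} it for $i\ge 3$, not below.

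Repaired this way, your verification is shorter and more uniform than the paper's three cases. It also shows directly why the tight set is exactly $\{(1,\Delta),(2,\Delta)\}$. The paper's version is more mechanical: each case is a self-contained one-variable estimate.
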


\begin{proof}
Define
\[
w_{i,j} := \frac{1}{i}+\frac{1}{j},
\qquad
\sigma_{i,j} := (i-j)^2.
\]
Then the primal problem \eqref{eq:primal-objective}--\eqref{eq:primal-nonneg}
can be written in the standard form
\[
\max \sum_{1\le i\le j\le \Delta} \sigma_{i,j} m_{i,j}
\quad\text{subject to}\quad
\sum_{i\le j} w_{i,j} m_{i,j} = n,\;
\sum_{i\le j} m_{i,j} = n-1,\;
m_{i,j}\ge 0.
\]

\medskip\noindent
\emph{Dual problem.}
The dual linear program associated with this primal problem is
\begin{equation}
\label{eq:dual-problem}
\begin{aligned}
\min\ & D(\lambda,\mu) \;=\; n\lambda + (n-1)\mu,\\
\text{subject to }&
\lambda w_{i,j} + \mu \;\ge\; \sigma_{i,j},
\quad 1\le i\le j\le \Delta,
\end{aligned}
\end{equation}
where $\lambda,\mu\in\mathbb{R}$ are the dual variables corresponding to
constraints \eqref{eq:primal-constraint-w} and \eqref{eq:primal-constraint-sum},
respectively.

For convenience, we define the dual slack function
\begin{equation}
\label{eq:Fij-def}
F(i,j)
\;:=\;
\lambda w_{i,j} + \mu - \sigma_{i,j}.
\end{equation}
Dual feasibility is equivalent to $F(i,j)\ge 0$ for all $1\le i\le j\le \Delta$.

\medskip\noindent
\emph{Choice of dual variables.}
We now determine $\lambda$ and $\mu$ by requiring that the dual constraints
be tight for the index pairs $(1,\Delta)$ and $(2,\Delta)$, that is,
\begin{equation}
\label{eq:dual-equalities-1-2}
F(1,\Delta)=0,
\qquad
F(2,\Delta)=0.
\end{equation}
Equivalently,
\[
\lambda w_{1,\Delta} + \mu = \sigma_{1,\Delta} = (\Delta-1)^2,
\qquad
\lambda w_{2,\Delta} + \mu = \sigma_{2,\Delta} = (\Delta-2)^2.
\]

Since
\[
w_{1,\Delta} = 1+\frac{1}{\Delta},
\qquad
w_{2,\Delta} = \frac12+\frac{1}{\Delta},
\]
subtracting the two equalities gives
\[
(\Delta-1)^2 - (\Delta-2)^2
=
\lambda\Bigl(1+\frac{1}{\Delta}-\frac12-\frac{1}{\Delta}\Bigr)
=
\frac{\lambda}{2},
\]
hence
\begin{equation}
\label{eq:lambda}
\lambda
=
2\bigl((\Delta-1)^2-(\Delta-2)^2\bigr)
=
4\Delta-6.
\end{equation}
Substituting this value into the first equality yields
\begin{equation}
\label{eq:mu}
\mu
=
(\Delta-1)^2 - (4\Delta-6)\Bigl(1+\frac{1}{\Delta}\Bigr)
=
\Delta^2 - 6\Delta + 3 + \frac{6}{\Delta}.
\end{equation}

With this choice of $\lambda$ and $\mu$, the slack function
\eqref{eq:Fij-def} takes the explicit form
\begin{equation}
\label{eq:Fij-explicit}
F(i,j)
=
(4\Delta-6)\Bigl(\frac{1}{i}+\frac{1}{j}\Bigr)
+ \Delta^2 - 6\Delta + 3 + \frac{6}{\Delta}
- (i-j)^2.
\end{equation}

\medskip\noindent
\emph{Verification of dual feasibility.}

\smallskip\noindent
\emph{Case 1: $i=1$, $1\le j\le \Delta$.}
From \eqref{eq:Fij-explicit},
\[
F(1,j)
=
(4\Delta-6)\Bigl(1+\frac{1}{j}\Bigr)
+ \Delta^2 - 6\Delta + 3 + \frac{6}{\Delta}
- (1-j)^2.
\]
A direct computation yields
\begin{equation}
\label{eq:F1j-factor}
\Delta j\,F(1,j)
=
(\Delta-j)
\left(
\Delta j^2 + \Delta^2 j - 2\Delta j + 4\Delta - 6
\right).
\end{equation}
For $1\le j\le \Delta-1$ we have $\Delta-j>0$, so it suffices to analyze
\[
G_1(j)
=
\Delta j^2 + (\Delta^2-2\Delta)j + 4\Delta - 6.
\]
This quadratic polynomial is strictly convex, with vertex at
\[
j_0=-\frac{\Delta^2-2\Delta}{2\Delta}=-\frac{\Delta-2}{2}\le 0,
\]
and hence is strictly increasing on $[0,\infty)$. Therefore,
\[
\min_{1\le j\le \Delta-1} G_1(j)=G_1(1)=\Delta^2+3\Delta-6>0
\quad(\Delta\ge 4).
\]
It follows from \eqref{eq:F1j-factor} that
\[
F(1,j)>0 \quad\text{for }1\le j\le \Delta-1,
\qquad
F(1,\Delta)=0.
\]

\smallskip\noindent
\emph{Case 2: $i=2$, $2\le j\le \Delta$.}
Similarly,
\[
F(2,j)
=
(4\Delta-6)\Bigl(\frac12+\frac{1}{j}\Bigr)
+ \Delta^2 - 6\Delta + 3 + \frac{6}{\Delta}
- (2-j)^2,
\]
and a direct computation yields
\begin{equation}
\label{eq:F2j-factor}
\Delta j\,F(2,j)
=
(\Delta-j)
\left(
\Delta j^2 + \Delta^2 j - 4\Delta j + 4\Delta - 6
\right).
\end{equation}
For $2\le j\le \Delta-1$, consider
\[
G_2(j)
=
\Delta j^2 + (\Delta^2-4\Delta)j + 4\Delta - 6.
\]
This is again strictly convex with vertex
\[
j_0=-\frac{\Delta^2-4\Delta}{2\Delta}=-\frac{\Delta-4}{2}\le 0,
\]
so $G_2$ is increasing on $[0,\infty)$ and
\[
\min_{2\le j\le \Delta-1} G_2(j)=G_2(2)=2\Delta^2-6>0.
\]
Hence
\[
F(2,j)>0 \quad\text{for }2\le j\le \Delta-1,
\qquad
F(2,\Delta)=0.
\]

\smallskip\noindent
\emph{Case 3: $3\le i\le j\le \Delta$.}
Using $1/i\ge 1/\Delta$ and $1/j\ge 1/\Delta$, we obtain
\[
F(i,j)
\ge
(4\Delta-6)\frac{2}{\Delta}
+ \Delta^2 - 6\Delta + 3 + \frac{6}{\Delta}
- (i-j)^2.
\]
Since $0\le j-i\le \Delta-3$, we have $(i-j)^2\le (\Delta-3)^2$, and therefore
\begin{align*}
F(i,j)
&\ge
\Delta^2 - 6\Delta + 3 + \frac{6}{\Delta}
+ \frac{8\Delta-12}{\Delta}
- (\Delta-3)^2\\
&=
2-\frac{6}{\Delta}>0
\qquad(\Delta\ge 4).
\end{align*}

Combining the three cases, we conclude that
\[
F(i,j)\ge 0 \quad\text{for all }1\le i\le j\le \Delta,
\]
with equality if and only if $(i,j)\in\{(1,\Delta),(2,\Delta)\}$.
Thus $(\lambda,\mu)$ is dual feasible.

\medskip\noindent
\emph{Construction of a primal feasible solution.}
Assume $m_{i,j}=0$ for all $(i,j)\notin\{(1,\Delta),(2,\Delta)\}$.
Then constraints \eqref{eq:primal-constraint-w}--\eqref{eq:primal-constraint-sum}
reduce to
\begin{equation}
\label{eq: reduced system}
m_{1,\Delta}+m_{2,\Delta}=n-1,
\qquad
w_{1,\Delta}m_{1,\Delta}+w_{2,\Delta}m_{2,\Delta}=n.
\end{equation}
Solving this system yields
\[
m_{1,\Delta}=\frac{(\Delta-2)n+(\Delta+2)}{\Delta},
\qquad
m_{2,\Delta}=\frac{2(n-\Delta-1)}{\Delta}.
\]
Since $n\ge\Delta+2$, both values are positive.

\medskip\noindent
\emph{Optimality and uniqueness.}
For the primal solution constructed above, we compute
\begin{align*}
\sigma(T)
&=
\sum_{1\le i\le j\le \Delta} \sigma_{i,j} m_{i,j}\\
&=
\sigma_{1,\Delta}m_{1,\Delta}+\sigma_{2,\Delta}m_{2,\Delta}
\qquad(\text{all other }m_{i,j}=0)\\
&=
(\lambda w_{1,\Delta}+\mu)m_{1,\Delta}+(\lambda w_{2,\Delta}+\mu)m_{2,\Delta}
\qquad(\text{by }F(1,\Delta)=F(2,\Delta)=0)\\
&=
\sum_{1\le i\le j\le \Delta} (\lambda w_{i,j}+\mu)\, m_{i,j}\\
&=
\lambda\sum_{i\le j} w_{i,j} m_{i,j}
+
\mu\sum_{i\le j} m_{i,j}
=
\lambda n + \mu (n-1)
=
D(\lambda,\mu),
\end{align*}
which coincides with the dual objective value at the feasible dual solution
$(\lambda,\mu)$ given in \eqref{eq:lambda} and \eqref{eq:mu}.
By strong duality, both the primal and dual solutions are optimal.

Furthermore, we have shown that
\[
F(i,j)=\lambda w_{i,j}+\mu-\sigma_{i,j}>0
\quad\text{for all }(i,j)\notin\{(1,\Delta),(2,\Delta)\},
\]
while $F(1,\Delta)=F(2,\Delta)=0$.
By complementary slackness, this implies that in any optimal primal solution
one must have
\[
m_{i,j}=0
\quad\text{for all }(i,j)\notin\{(1,\Delta),(2,\Delta)\}.
\]
Since the reduced system \eqref{eq: reduced system}
has a unique solution, the optimal primal solution is unique.
This completes the proof.

\end{proof}

It is worth noting that the optimal solution described in
Theorem~\ref{thm:SO-LP-1-Delta} need not correspond to an actual tree when the
variables $m_{i,j}$ are required to be integers.
The expressions for $m_{1,\Delta}$ and $m_{2,\Delta}$ are integral if and only if
$n\equiv1\pmod{\Delta}$.
In this case, the linear program yields the exact maximum value of $\sigma(T)$.
For all other congruence classes of $n$ modulo $\Delta$, the linear program
provides a strict upper bound.

\bigskip


\section{The penalty function and the case $n \equiv 0 \pmod \Delta$}

To handle the case in which the linear programming bound is not attainable,
we introduce a penalty function measuring the deviation from dual optimality.

\medskip

It is now convenient to define a penalty function associated with any tree $T$:
\begin{equation}
\label{def:P(T)}
P(T) \;:=\; \sum_{1\le i\le j\le \Delta} F(i,j)\, m_{i,j}.
\end{equation}

Since each $m_{i,j}$ is nonnegative and each $F(i,j)$ represents the slack in the dual constraint corresponding to $(i,j)$, the penalty $P(T)$ measures the total deviation of $T$ from dual optimality. In particular, by complementary slackness, every optimal solution of the linear program from Theorem~\ref{thm:SO-LP-1-Delta} satisfies $P(T) = 0$, and more generally we have the identity
\[
\sigma(T) = \lambda n + \mu(n{-}1) - P(T),
\]
where $\lambda,\mu$ are the dual variables defined in \eqref{eq:lambda} and \eqref{eq:mu}.
Consequently, maximizing $\sigma(T)$ over all trees with fixed $n$ and $\Delta$
is equivalent to minimizing $P(T)$ over the feasible edge degree--pair distributions
of trees $m_{i,j}$.

\medskip

To estimate $P(T)$ from below, we analyze the dual slack
function $F(i,j)$. The next lemmas describe the location of its minimal values
and establish comparison inequalities that will be used to control the total
penalty contributed by various degree--pair configurations.

\begin{lemma}
\label{lem:optimal-pair-F}
Let $\Delta \ge 4$ be fixed, and let $F(i,j)$ be defined as in
\eqref{eq:Fij-explicit}.
Then the following holds.
\begin{itemize}
\item For every $3 \le i \le \left\lfloor \frac{\Delta+3}{2} \right\rfloor$, the function
$j \mapsto F(i,j)$, $1 \le j \le \Delta-1$, attains its minimum at $j=\Delta$.
\item For every $\left\lfloor \frac{\Delta+3}{2} \right\rfloor+1 \le i \le \Delta-1$, the function
$j \mapsto F(i,j)$, $1 \le j \le \Delta-1$, attains its minimum at $j=2$.
\end{itemize}
\end{lemma}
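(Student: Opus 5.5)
The statement compares values of $F(i,\cdot)$, so the plan is to work with differences $F(i,j)-F(i,j')$. In these the constant $\mu$ cancels, and by \eqref{eq:Fij-explicit} only the terms $(4\Delta-6)/j$ and $-(i-j)^2$ remain. I will read the statement as minimizing over all admissible second indices, including $j=\Delta$ and the values $j<i$; this is legitimate because $F$ is symmetric. The proof then has three steps: locate the minimizer among $j\ge i$, compare the two candidates $j=2$ and $j=\Delta$, and locate the minimizer among $j<i$.

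\emph{Step 1: $j\ge i$.} We have
\[
F(i,j)-F(i,\Delta)=(4\Delta-6)\Bigl(\tfrac1j-\tfrac1\Delta\Bigr)+(\Delta-i)^2-(j-i)^2 .
\]
For $i\le j\le\Delta$ both summands are nonnegative, so on this range the minimum is at $j=\Delta$.

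\emph{Step 2: $j=2$ versus $j=\Delta$.} A direct computation gives
\[
F(i,\Delta)-F(i,2)=(\Delta-2)\Bigl(2i-\Delta-2-\tfrac{2\Delta-3}{\Delta}\Bigr).
\]
Hence $F(i,\Delta)<F(i,2)$ exactly when $2i<\Delta+4-3/\Delta$. For integers this means $2i\le\Delta+3$, i.e.\ $i\le\lfloor(\Delta+3)/2\rfloor$. This is precisely the threshold appearing in the lemma, and it decides which candidate wins. In addition, $F(i,1)-F(i,2)=2\Delta-3-(2i-3)=2(\Delta-i)>0$, so $j=1$ is never the minimizer.

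\emph{Step 3: $j<i$, small $i$.} Let $3\le i\le\lfloor(\Delta+3)/2\rfloor$. For $3\le j<i$ we have $i-j\le i-3\le\Delta-i$, and $1/j>1/\Delta$. The displayed difference from Step 1 is therefore positive. For $j\in\{1,2\}$, Step 2 gives $F(i,1)>F(i,2)\ge F(i,\Delta)$. So in the first regime the minimum is at $j=\Delta$.

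\emph{Step 3: $j<i$, large $i$.} Let $i\ge\lfloor(\Delta+3)/2\rfloor+1$, so $i\ge(\Delta+4)/2$. By Steps 1 and 2, the minimum over $j\ge i$ is $F(i,\Delta)$, and this exceeds $F(i,2)$; also $j=1$ is excluded. It remains to show $F(i,j)\ge F(i,2)$ for $2\le j\le i$. Write
\[
F(i,j)-F(i,2)=(j-2)\,b(j),\qquad b(j)=2i-2-j-\frac{2\Delta-3}{j}.
\]
The function $b$ is concave on $[2,i]$, so it suffices to check the endpoints. At the left end, $b(2)=2i-\Delta-\tfrac52>0$. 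At the right end, $b(i)\ge0$ is equivalent to $i(i-2)\ge2\Delta-3$. Using $i\ge(\Delta+4)/2$, this reduces to $\Delta^2-4\Delta+12\ge0$, which always holds.

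This endpoint verification for $b$ is the step I expect to need the most care. In particular, the concavity argument must also cover the boundary case $i=\Delta-1$ for small $\Delta$, which I will check directly if necessary. The remaining steps are routine algebra.
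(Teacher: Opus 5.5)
Your proof is correct and follows essentially the same route as the paper. The paper also cancels the constant and factors $g_i(j)-g_i(\Delta)=(\Delta-j)\bigl(\tfrac{A}{j\Delta}-(2i-\Delta-j)\bigr)$ for the first regime, and it uses $F(i,1)-F(i,2)=2(\Delta-i)$. For the second regime it proves concavity of the same function $s(j)=2i-j-2-\tfrac{A}{2j}$ (your $b$), but on $[3,\Delta]$ with endpoint checks $s(3)\ge 0$ and $s(\Delta)\ge 3/\Delta$, rather than on $[2,i]$ after splitting at $j=i$.

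Your Step~2 identity is exactly the paper's $s(\Delta)$ check multiplied by $\Delta-2$, with the small bonus of showing that $\lfloor(\Delta+3)/2\rfloor$ is precisely the crossover point between $F(i,2)$ and $F(i,\Delta)$. Your concern about the boundary case $i=\Delta-1$ is unfounded: your endpoint argument covers it uniformly.
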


\begin{proof}
Fix $\Delta\ge 4$ and write $A:=4\Delta-6$. For fixed $i$, the dependence of $F(i,j)$
on $j$ is given by
\[
g_i(j):=\frac{A}{j}-(i-j)^2,
\]
since
\[
F(i,j)=\left(\frac{A}{i}+\Delta^2-6\Delta+3+\frac{6}{\Delta}\right)+g_i(j).
\]
Thus, minimizing $F(i,j)$ over integers $j$ is equivalent to minimizing $g_i(j)$.

\medskip
\noindent\emph{Case 1: $3\le i \le \left\lfloor\frac{\Delta+3}{2}\right\rfloor$.}

For $1\le j\le \Delta-1$, we compare $g_i(j)$ with $g_i(\Delta)$.
A direct computation yields
\begin{align*}
g_i(j)-g_i(\Delta)
&=A\left(\frac{1}{j}-\frac{1}{\Delta}\right)
   -\bigl((i-j)^2-(i-\Delta)^2\bigr)\\
&=(\Delta-j)\left(\frac{A}{j\Delta}-\bigl(2i-\Delta-j\bigr)\right).
\end{align*}
Since $\Delta-j>0$, it suffices to show
\[
\frac{A}{j\Delta}\ge 2i-\Delta-j.
\]

Under the assumption $i\le \left\lfloor\frac{\Delta+3}{2}\right\rfloor$ we have
$2i-\Delta\le 3$. Hence, for $j\ge 3$,
\[
2i-\Delta-j\le 3-j\le 0,
\]
and the desired inequality holds because $A/(j\Delta)>0$.
For $j=1,2$, we use $2i-\Delta\le 3$ to obtain
\[
\frac{A}{\Delta}=4-\frac{6}{\Delta}\ge 2\ge 2i-\Delta-1,
\]
\[
\frac{A}{2\Delta}=2-\frac{3}{\Delta}\ge 1\ge 2i-\Delta-2.
\]
Thus $g_i(j)\ge g_i(\Delta)$ for all $1\le j\le \Delta-1$, and the minimum of
$F(i,j)$ is attained at $j=\Delta$.

\medskip
\noindent\emph{Case 2: $\left\lfloor\frac{\Delta+3}{2}\right\rfloor+1 \le i \le \Delta-1$.}

We first compare the values at $j=1$ and $j=2$:
\[
g_i(1)-g_i(2)
=A\left(1-\frac12\right)-\bigl((i-1)^2-(i-2)^2\bigr)
=\frac{A}{2}-(2i-3)=2(\Delta-i)>0,
\]
so $j=1$ cannot be a minimizer.

For $j\ge 3$, we compute
\begin{align*}
g_i(j)-g_i(2)
&=A\left(\frac{1}{j}-\frac12\right)
   -\bigl((i-j)^2-(i-2)^2\bigr)\\
&=(j-2)\left(2i-j-2-\frac{A}{2j}\right).
\end{align*}
Since $j-2>0$, it is enough to show that
\[
s(j):=2i-j-2-\frac{A}{2j}\ge 0
\qquad (3\le j\le \Delta).
\]

Viewed as a real function on $[3,\Delta]$, we have
\[
s''(j)=-\frac{A}{j^3}<0,
\]
so $s$ is concave on $[3,\Delta]$ and attains its minimum at an endpoint.
Hence it suffices to verify $s(3)\ge 0$ and $s(\Delta)\ge 0$.

Since
\[
i\ge \left\lfloor \frac{\Delta+3}{2}\right\rfloor+1
\ge \frac{\Delta+3}{2},
\]
we obtain $2i\ge \Delta+3$.
Therefore
\[
s(3)=2i-5-\frac{A}{6}
=2i-4-\frac{2\Delta}{3}
\ge (\Delta+3)-4-\frac{2\Delta}{3}
=\frac{\Delta}{3}-1\ge 0
\qquad (\Delta\ge 4).
\]

Moreover,
\[
s(\Delta)=2i-\Delta-2-\frac{A}{2\Delta}
=2i-\Delta-4+\frac{3}{\Delta}.
\]
If $\Delta$ is even, then
$\left\lfloor\frac{\Delta+3}{2}\right\rfloor=\frac{\Delta}{2}+1$
and hence $i\ge \frac{\Delta}{2}+2$, so $2i\ge \Delta+4$ and
$s(\Delta)\ge \frac{3}{\Delta}>0$.
If $\Delta$ is odd, then
$\left\lfloor\frac{\Delta+3}{2}\right\rfloor=\frac{\Delta+3}{2}$
and hence $i\ge \frac{\Delta+5}{2}$, so $2i\ge \Delta+5$ and
$s(\Delta)\ge 1+\frac{3}{\Delta}>0$.

Thus $s(j)\ge 0$ for all $3\le j\le \Delta$, implying
$g_i(j)\ge g_i(2)$ for all $j\ge 3$.
Together with $g_i(1)>g_i(2)$, this shows that the minimum of $F(i,j)$
is attained at $j=2$.
\end{proof}

We next compare the minimal values identified above with the value
$F(\Delta,\Delta)$, which plays a central role in the lower bound for $P(T)$.

\begin{lemma}
\label{lem:iF-i2-dominates-DD}
Let $\Delta\ge 4$, and let $F(i,j)$ be defined as in
\eqref{eq:Fij-explicit}.  
For every integer
\[
\left\lfloor \frac{\Delta+3}{2} \right\rfloor + 1 \le i \le \Delta-1
\]
the following strict inequality holds:
\[
i\,F(i,2) \;>\; F(\Delta,\Delta).
\]
\end{lemma}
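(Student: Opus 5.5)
Since $F(i,2)$ and $F(\Delta,\Delta)$ are explicit rational expressions in $i$ and $\Delta$, I would prove the lemma by direct computation combined with a concavity argument in $i$.

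\textbf{Explicit forms.} Write $A=4\Delta-6$ and $C=\Delta^2-6\Delta+3+6/\Delta$. From \eqref{eq:Fij-explicit},
\[
F(\Delta,\Delta)=\frac{2A}{\Delta}+C=\Delta^2-6\Delta+11-\frac{6}{\Delta}.
\]
Using $A/2=2\Delta-3$,
\[
F(i,2)=\frac{A}{i}+\Delta^2-4\Delta+\frac{6}{\Delta}-(i-2)^2 .
\]
Hence
\[
i\,F(i,2)=A+h(i),\qquad h(i):=i\Bigl(\Delta^2-4\Delta+\frac{6}{\Delta}\Bigr)-i(i-2)^2 .
\]
The claim therefore becomes $A+h(i)>\Delta^2-6\Delta+11-6/\Delta$ for all integers $i$ in the stated range.

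\textbf{Range and concavity.} The range $\lfloor(\Delta+3)/2\rfloor+1\le i\le\Delta-1$ is empty for $\Delta=4,5$, so the statement is vacuous there and I may assume $\Delta\ge 6$. Viewing $h$ as a cubic in a real variable, $h''(i)=-6i+8<0$ for $i\ge 2$. So $h$ is concave on the interval $\bigl[\lfloor(\Delta+3)/2\rfloor+1,\;\Delta-1\bigr]$, and its minimum there is attained at an endpoint. It therefore suffices to verify the inequality at the two endpoints.

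\textbf{Upper endpoint $i=\Delta-1$.} Here $(i-2)^2=(\Delta-3)^2$, so
\[
h(\Delta-1)=(\Delta-1)\Bigl(2\Delta-9+\frac{6}{\Delta}\Bigr).
\]
Adding $A$ gives
\[
A+h(\Delta-1)=2\Delta^2-7\Delta+9-\frac{6}{\Delta}.
\]
Subtracting $F(\Delta,\Delta)$ leaves $\Delta^2-\Delta-2=(\Delta-2)(\Delta+1)>0$.

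\textbf{Lower endpoint $i_0=\lfloor(\Delta+3)/2\rfloor+1$.} We have $i_0\in\{\Delta/2+2,\;(\Delta+5)/2\}$ according to the parity of $\Delta$, so in both cases $i_0-2\le(\Delta+1)/2$. Then
\[
\Delta^2-4\Delta-(i_0-2)^2\;\ge\;\Delta^2-4\Delta-\frac{(\Delta+1)^2}{4}\;=\;\frac{3\Delta^2-18\Delta-1}{4}.
\]
This lower bound is positive for $\Delta\ge 7$. Multiplying by $i_0\ge(\Delta+4)/2$ then gives a term of order $3\Delta^3/8$, which dominates $\Delta^2$ once $\Delta$ is moderately large. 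The remaining small values of $\Delta$ will be settled by direct evaluation: for example, $\Delta=6$ forces $i=5$, which is also the upper endpoint and is already covered.

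\textbf{Expected difficulty.} The only delicate step is keeping the lower-endpoint estimate clean for small $\Delta$. I would handle it by splitting by the parity of $\Delta$, substituting the exact value of $i_0$, and checking that the resulting one-variable polynomial inequality in $\Delta$ holds for $\Delta\ge 6$. Where a leading-term bound is too crude, I would check the first few values of $\Delta$ numerically.
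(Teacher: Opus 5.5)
Your proposal is correct and follows essentially the paper's route. The paper also writes $D(x)=xF(x,2)-F(\Delta,\Delta)$ as a cubic with $D''(x)=-6x+8<0$ and reduces to the endpoints. It obtains $D(\Delta-1)=(\Delta-2)(\Delta+1)$ and handles $i_0$ by substituting the exact values $i_0=m+2$ or $i_0=m+3$ according to parity.

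Your uniform estimate at $i_0$ closes without any case check. Using $i_0-2\le(\Delta+1)/2$ and $i_0\ge(\Delta+4)/2$ and multiplying by $8$, the required inequality reduces to $3\Delta^3-14\Delta^2+7\Delta-140+48/\Delta>0$. This holds for all $\Delta\ge 7$, and $\Delta=6$ is the upper endpoint.
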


\begin{proof}
Set $A:=4\Delta-6$ and $B:=\Delta^2-6\Delta+3+\frac{6}{\Delta}$, so that
\[
F(i,j)=A\left(\frac1i+\frac1j\right)+B-(i-j)^2.
\]
For real $x\ge 3$, define
\[
D(x):=x\,F(x,2)-F(\Delta,\Delta).
\]
Using
\[
F(x,2)=A\left(\frac1x+\frac12\right)+B-(x-2)^2,
\qquad
F(\Delta,\Delta)=\frac{2A}{\Delta}+B,
\]
a straightforward simplification yields
\begin{equation}
\label{eq:Dx-expanded}
D(x)=-x^3+4x^2+x\Bigl(\Delta^2-4\Delta-4+\frac{6}{\Delta}\Bigr)
-\Delta^2+10\Delta-17+\frac{6}{\Delta}.
\end{equation}

We first observe that $D$ is strictly concave on $[3,\Delta-1]$. Indeed,
differentiating \eqref{eq:Dx-expanded} twice gives
\[
D''(x)=-6x+8<0\qquad (x\ge 3).
\]
Consequently, on any subinterval of $[3,\Delta-1]$, the minimum of $D$ is attained
at an endpoint.

Let
\[
i_0:=\left\lfloor \frac{\Delta+3}{2} \right\rfloor + 1.
\]
For $\Delta=4,5$ the interval $\{i_0,\dots,\Delta-1\}$ is empty, and the claim is
vacuous. Hence we assume $\Delta\ge 6$, so that $i_0\le \Delta-1$.

A direct computation shows that
\begin{equation}
\label{eq:D-Delta-1}
D(\Delta-1)=\Delta^2-\Delta-2=(\Delta-2)(\Delta+1)>0.
\end{equation}

It remains to verify that $D(i_0)>0$.
Two cases are distinguished according to the parity of $\Delta$.

If $\Delta=2m$ is even, then $m\ge 3$ and $i_0=m+2$. Substitution into $D$ gives
\[
D(i_0)=\frac{3m^4-6m^3+4m^2-14m+9}{m}
=3m^3-6m^2+4m-14+\frac{9}{m}.
\]
The cubic polynomial $p(m):=3m^3-6m^2+4m-14$ is strictly increasing for $m\ge1$,
and satisfies $p(3)=25>0$. Hence $D(i_0)>0$ for all $m\ge3$.

If $\Delta=2m+1$ is odd, then $\Delta\ge7$ implies $m\ge3$ and $i_0=m+3$.
In this case,
\[
D(i_0)=\frac{6m^4+m^3-13m^2-40m+4}{2m+1}.
\]
Let $q(m):=6m^4+m^3-13m^2-40m+4$. For $m\ge3$, the derivative $q'(m)$ is positive,
so $q$ is increasing on $[3,\infty)$ and
$q(m)\ge q(3)=280>0$. Since $2m+1>0$, this yields $D(i_0)>0$.

Combining the above with the concavity of $D$, we conclude that
\[
D(i)\ge \min\{D(i_0),D(\Delta-1)\}>0
\]
for all integers $i_0\le i\le \Delta-1$. Therefore
$iF(i,2)>F(\Delta,\Delta)$ throughout the stated range, completing the proof.
\end{proof}

\begin{lemma}
\label{lem:Delta-1FiDelta}
Let $\Delta\ge 4$ , and let $F(i,j)$ be defined as in
\eqref{eq:Fij-explicit}. 
Then for every integer
\[
3 \le i \le \left\lfloor \frac{\Delta+3}{2} \right\rfloor
\]
we have the strict inequality
\[
(\Delta-1)\,F(i,\Delta)\;>\;F(\Delta,\Delta).
\]
\end{lemma}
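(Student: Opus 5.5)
The plan is to reduce the claim to the single value $i=3$: first show that $i\mapsto F(i,\Delta)$ is nondecreasing on the stated range, then verify the inequality at $i=3$ by direct computation. As in the proof of Lemma~\ref{lem:iF-i2-dominates-DD}, write $A:=4\Delta-6$ and $B:=\Delta^2-6\Delta+3+\frac{6}{\Delta}$. Then
\[
F(i,\Delta)=\frac{A}{i}+\frac{A}{\Delta}+B-(\Delta-i)^2,
\qquad
F(\Delta,\Delta)=\frac{2A}{\Delta}+B=\Delta^2-6\Delta+11-\frac{6}{\Delta}.
\]
The right-hand side of the claimed inequality does not depend on $i$.

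\emph{Step 1 (monotonicity in $i$).} Regard $h(x):=\frac{A}{x}-(\Delta-x)^2$ as a function of a real variable on $[3,\frac{\Delta+3}{2}]$. Its derivative is $h'(x)=2(\Delta-x)-\frac{A}{x^2}$. On this interval we have $2(\Delta-x)\ge \Delta-3$ and $\frac{A}{x^2}\le \frac{4\Delta-6}{9}$. Hence $h'(x)\ge \Delta-3-\frac{4\Delta-6}{9}=\frac{5\Delta-21}{9}>0$ as soon as $\Delta\ge 5$. Therefore, for $\Delta\ge5$, $F(i,\Delta)\ge F(3,\Delta)$ for every integer $i$ in the range. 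For $\Delta=4$ the range consists of $i=3$ only, so no monotonicity is needed.

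\emph{Step 2 (the value at $i=3$).} A direct simplification gives
\[
F(3,\Delta)=\frac{4\Delta-6}{3}+4-\frac{6}{\Delta}+\Delta^2-6\Delta+3+\frac{6}{\Delta}-(\Delta-3)^2=\frac{4\Delta-12}{3},
\]
which is positive for $\Delta\ge4$. It follows that
\[
(\Delta-1)F(3,\Delta)-F(\Delta,\Delta)=\frac{4\Delta^2-16\Delta+12}{3}-\Delta^2+6\Delta-11+\frac{6}{\Delta}
=\frac{\Delta^2+2\Delta-21}{3}+\frac{6}{\Delta}.
\]
This is strictly positive for all $\Delta\ge4$; for instance, at $\Delta=4$ it equals $1+\frac32$.

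\emph{Conclusion and main difficulty.} Combining the two steps yields $(\Delta-1)F(i,\Delta)\ge(\Delta-1)F(3,\Delta)>F(\Delta,\Delta)$ throughout the stated range. The main obstacle is Step~1. A crude bound that takes the worst case of $\frac{A}{i}$ and of $(\Delta-i)^2$ separately is too weak: it only gives a lower bound of order $6\Delta$ against the order-$\Delta^2$ quantity $F(\Delta,\Delta)$. So the monotonicity of the combined expression is genuinely needed. If the derivative estimate turns out to be too loose for small $\Delta$, the fallback is to check $\Delta\in\{4,5,6\}$ by hand; these involve only a few values of $i$.
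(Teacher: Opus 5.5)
Your proposal is correct and takes essentially the same route as the paper. The paper also shows that $x\mapsto F(x,\Delta)$ is increasing, using the same derivative bound $\frac{5\Delta-21}{9}>0$ for $\Delta\ge5$, with $\Delta=4$ trivial, and then computes directly at $i=3$; your value $\frac{\Delta^2+2\Delta-21}{3}+\frac{6}{\Delta}$ equals the paper's $\frac{(\Delta-3)(\Delta-1)(\Delta+6)}{3\Delta}$, and it is positive for all $\Delta\ge4$ since $\Delta^2+2\Delta-21\ge3$ there, so the small-$\Delta$ fallback you mention is never needed.
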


\begin{proof}
Set $A:=4\Delta-6$ and $B:=\Delta^2-6\Delta+3+\frac{6}{\Delta}$, so that
\[
F(i,j)=A\left(\frac1i+\frac1j\right)+B-(i-j)^2,
\qquad
F(\Delta,\Delta)=\frac{2A}{\Delta}+B.
\]
For fixed $\Delta$, consider the real function
\[
\phi(x):=F(x,\Delta)=A\left(\frac1x+\frac1\Delta\right)+B-(\Delta-x)^2,
\qquad x\ge 3.
\]
A direct differentiation gives
\[
\phi'(x)=-\frac{A}{x^2}+2(\Delta-x).
\]
On the interval $[3,(\Delta+3)/2]$ we have $2(\Delta-x)\ge \Delta-3$ and $x^2\ge 9$, hence
\[
\phi'(x)\ge (\Delta-3)-\frac{A}{9}
=(\Delta-3)-\frac{4\Delta-6}{9}
=\frac{5\Delta-21}{9}.
\]
The right-hand side is positive for all $\Delta\ge 5$.
For $\Delta=4$ the interval $[3,(\Delta+3)/2]$ contains only $x=3$.
Consequently, $\phi$ is increasing on $[3,(\Delta+3)/2]$ (for $\Delta\ge 5$), and thus for every integer
$3\le i\le \left\lfloor \frac{\Delta+3}{2}\right\rfloor$ we obtain
\begin{equation}
\label{eq:phi-min-at-3}
F(i,\Delta)=\phi(i)\ge \phi(3)=F(3,\Delta).
\end{equation}

Therefore it suffices to prove the strict inequality at $i=3$:
\[
(\Delta-1)F(3,\Delta)>F(\Delta,\Delta).
\]
Indeed, using $F(3,\Delta)=A\left(\frac13+\frac1\Delta\right)+B-(\Delta-3)^2$ and
$F(\Delta,\Delta)=\frac{2A}{\Delta}+B$, we compute
\begin{align*}
(\Delta-1)F(3,\Delta)-F(\Delta,\Delta)
&=(\Delta-1)\left(A\Bigl(\frac13+\frac1\Delta\Bigr)+B-(\Delta-3)^2\right)-\left(\frac{2A}{\Delta}+B\right)\\
&=\frac{(\Delta-3)(\Delta-1)(\Delta+6)}{3\Delta}.
\end{align*}
Since $\Delta\ge 4$, this quantity is strictly positive. Hence $(\Delta-1)F(3,\Delta)>F(\Delta,\Delta)$.
Finally, combining this with \eqref{eq:phi-min-at-3} yields
\[
(\Delta-1)F(i,\Delta)\ge (\Delta-1)F(3,\Delta) > F(\Delta,\Delta),
\]
for every integer $3 \le i \le \left\lfloor \frac{\Delta+3}{2} \right\rfloor$, as claimed.
\end{proof}

The following lemma shows that, in the absence of edges of type $(\Delta,\Delta)$,
a substantial number of edges must occur among vertices of small degrees.

\begin{lemma}
\label{lem:block-without-Delta}
Let $\Delta\ge 4$ and let $T$ be a tree of order $n$ with maximum degree $\Delta$.
Let $n_i$ and $m_{i,j}$ satisfy the identities
\eqref{eq:tree-sum-ni}--\eqref{eq:tree-ni-Delta}.
Put
\[
t:=\left\lfloor\frac{\Delta+3}{2}\right\rfloor,
\qquad
E_{\le t}:=\sum_{1\le p\le q\le t} m_{p,q}.
\]
Assume that $\Delta\mid n$,
that $n_{i'}=0$ for all $i'>t$,
and that $m_{\Delta,\Delta}=0$.
Then
\[
E_{\le t}\;\ge\;\Delta-1.
\]
\end{lemma}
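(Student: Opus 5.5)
The plan is a direct edge count. I read the hypothesis ``$n_{i'}=0$ for all $i'>t$'' as applying to $t<i'<\Delta$, so that every vertex has degree at most $t$ or exactly $\Delta$. This must be the intended reading, since $t<\Delta$ for $\Delta\ge4$ and $T$ has maximum degree $\Delta$, which forces $n_\Delta\ge1$.

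With this reading, every edge of $T$ is of one of three kinds:
\begin{itemize}
\item an edge with both end-degrees in $\{1,\dots,t\}$, counted by $E_{\le t}$;
\item an edge of type $(p,\Delta)$ with $p\le t$;
\item an edge of type $(\Delta,\Delta)$.
\end{itemize}
The third kind is excluded by $m_{\Delta,\Delta}=0$. Hence
\[
n-1 \;=\; E_{\le t} + \sum_{p\le t} m_{p,\Delta}.
\]

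Next I would evaluate the second sum with identity \eqref{eq:tree-ni-Delta}. Because $m_{\Delta,\Delta}=0$ and $m_{p,\Delta}=0$ for $t<p<\Delta$, that identity reduces to $\sum_{p\le t} m_{p,\Delta}=\Delta n_\Delta$. Combinatorially, the $\Delta$-vertices form an independent set, so the edges at them are pairwise distinct and there are exactly $\Delta n_\Delta$ of them. Therefore
\[
E_{\le t}=n-1-\Delta n_\Delta .
\]

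The only step with any content is the divisibility argument. Since these $\Delta n_\Delta$ edges are distinct edges of $T$, we have $\Delta n_\Delta\le n-1<n$. Both $\Delta n_\Delta$ and $n$ are multiples of $\Delta$, the latter because $\Delta\mid n$. So the strict inequality $\Delta n_\Delta<n$ improves to $\Delta n_\Delta\le n-\Delta$. Substituting gives
\[
E_{\le t}\ge n-1-(n-\Delta)=\Delta-1.
\]

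There is no real obstacle; the one point to state carefully is the interpretation of the degree hypothesis noted at the start.
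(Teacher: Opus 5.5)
Your proof is correct and is essentially the paper's argument: both use the same edge partition $n-1=E_{\le t}+\Delta n_\Delta$, obtained from $m_{\Delta,\Delta}=0$ and the $\Delta$-handshake identity. The paper phrases the last step as $E_{\le t}\equiv -1 \pmod{\Delta}$ together with $E_{\le t}\ge 0$, where you instead derive $\Delta n_\Delta\le n-\Delta$; your reading of the degree hypothesis as $t<i'<\Delta$ is also the one the paper uses implicitly, both in its proof and when applying the lemma in Theorem~\ref{thm:Pmin-n0modDelta}.
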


\begin{proof}
Let
\[
x:=\sum_{k=1}^{\Delta-1} m_{k,\Delta}
\]
be the number of edges incident with a $\Delta$-vertex and a non-$\Delta$-vertex.
From the $\Delta$-handshake identity \eqref{eq:tree-ni-Delta} and the assumption $m_{\Delta,\Delta}=0$, we obtain
\begin{equation}
\label{eq:x-multiple-Delta}
x=\Delta n_\Delta,
\qquad\text{hence}\qquad
x\equiv 0 \pmod{\Delta}.
\end{equation}

Since $n_{i'}=0$ for all $i'>t$, every edge not incident with a $\Delta$-vertex has both endpoints in
$\{1,2,\dots,t\}$. Using $|E(T)|=n-1$ and partitioning edges according to whether they are incident with a
$\Delta$-vertex, we obtain
\begin{equation}
\label{eq:edge-partition-Et}
n-1 = x + E_{\le t}.
\end{equation}
Reducing \eqref{eq:edge-partition-Et} modulo $\Delta$ and using $\Delta\mid n$ yields
\[
x+E_{\le t}\equiv -1 \pmod{\Delta}.
\]
Combining this with \eqref{eq:x-multiple-Delta} gives
\begin{equation}
\label{eq:Et-cong}
E_{\le t}\equiv -1 \pmod{\Delta}.
\end{equation}

Since $E_{\le t}\ge 0$ and $E_{\le t}\equiv -1\pmod{\Delta}$, the smallest possible value of $E_{\le t}$
is $\Delta-1$. Therefore $E_{\le t}\ge \Delta-1$, as claimed.
\end{proof}

\begin{lemma}
\label{lem:Fpq-lower-by-F3D}
Let $\Delta\ge 4$ and put
\[
t:=\left\lfloor\frac{\Delta+3}{2}\right\rfloor .
\]
Then for all integers $1\le p\le q\le t$ we have
\begin{equation}
\label{eq:Fpq-lower-by-F3D}
F(p,q)\ge F(3,\Delta).
\end{equation}
\end{lemma}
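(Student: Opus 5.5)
The plan is to compare both sides through the explicit form \eqref{eq:Fij-explicit}. Write $A:=4\Delta-6$ and $B:=\Delta^2-6\Delta+3+\frac{6}{\Delta}$, so that $F(p,q)=A\bigl(\frac1p+\frac1q\bigr)+B-(p-q)^2$. The constant $B$ cancels, and \eqref{eq:Fpq-lower-by-F3D} becomes equivalent to
\[
A\Bigl(\frac1p+\frac1q\Bigr)-(q-p)^2\;\ge\;A\Bigl(\frac13+\frac1\Delta\Bigr)-(\Delta-3)^2 .
\]
The right-hand side is the fixed quantity $R(\Delta):=A(\frac13+\frac1\Delta)-(\Delta-3)^2$. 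Since $A/3\approx\frac43\Delta$, this is roughly $-\Delta^2+O(\Delta)$. The left-hand side only involves degrees up to $t\approx(\Delta+3)/2$, so its squared term is at most about $\Delta^2/4$. The gap between the two sides is therefore of order $\frac34\Delta^2$, and crude estimates should suffice for large $\Delta$.

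I will treat two cases according to $p$.

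\emph{Case $p\ge2$.} Here $2\le p\le q\le t$. I use $\frac1p+\frac1q\ge\frac2t$ and $(q-p)^2\le(t-2)^2$. This reduces the claim to $\frac{2A}{t}-(t-2)^2\ge R(\Delta)$.

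\emph{Case $p=1$.} I use $\frac1p+\frac1q\ge1+\frac1t$ and $(q-1)^2\le(t-1)^2$. This reduces the claim to $A\bigl(1+\frac1t\bigr)-(t-1)^2\ge R(\Delta)$. The extra term $A$ here easily absorbs the larger squared term.

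Alternatively, for $p=1$ I could appeal to the factorisation \eqref{eq:F1j-factor}. However, the direct bound above seems cleaner.

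In each case I substitute $t=\frac{\Delta}{2}+1$ for even $\Delta$ and $t=\frac{\Delta+3}{2}$ for odd $\Delta$. After multiplying by $3\Delta t>0$, each inequality becomes a polynomial inequality in $\Delta$ whose leading term is positive, of order $\frac34\Delta^2\cdot 3\Delta t$. I then verify positivity for $\Delta\ge\Delta_0$ by a monotonicity argument, in the same style as the proof of Lemma~\ref{lem:iF-i2-dominates-DD}.

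The main obstacle is the small values of $\Delta$, roughly $4\le\Delta\le7$, where the crude bounds may fail. For example, when $\Delta=4$ we have $t=3$, and the worst pairs $(p,q)$ lie close to $(3,\Delta)$ in value. For these finitely many $\Delta$ I will check the finitely many pairs $1\le p\le q\le t$ directly from \eqref{eq:Fij-explicit}. For instance, when $\Delta=4$ the pair $(3,3)$ gives $F(3,3)-F(3,4)=\frac{A}{3}-\frac{A}{4}+1>0$. The pairs $(1,q)$ and $(2,q)$ can be handled quickly using the monotonicity established in Cases~1--2 of Theorem~\ref{thm:SO-LP-1-Delta}.
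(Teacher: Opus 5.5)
Your proposal is correct and follows essentially the paper's route. You cancel $B$, bound $\frac1p+\frac1q$ from below and $(p-q)^2$ from above uniformly in terms of $t$, verify the resulting inequality in $\Delta$ for large $\Delta$, and settle the small cases by direct inspection. Your split into $p=1$ and $p\ge2$ sharpens the paper's single estimate, which pairs $\frac2t$ with $(t-1)^2$ even though these extremes occur at different pairs; with the split, your two reduced inequalities hold for every $\Delta\ge4$ except $\Delta=5$ in the case $p\ge2$. Also, the monotonicity you want for the pairs $(1,q)$ and $(2,q)$ is just that $q\mapsto F(p,q)$ decreases for $q\ge p$, which follows immediately from \eqref{eq:Fij-explicit}; Cases~1--2 of Theorem~\ref{thm:SO-LP-1-Delta} do not establish it.
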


\begin{proof}
Recall
\[
F(i,j)=A\left(\frac1i+\frac1j\right)+B-(i-j)^2,
\qquad
A:=4\Delta-6.
\]
For $1\le p\le q\le t$ we use the elementary bounds
\[
\frac1p+\frac1q \ge \frac{2}{t},
\qquad
(p-q)^2\le (t-1)^2.
\]
Hence
\begin{eqnarray}
\label{in: dif_F}
F(p,q)-F(3,\Delta)
&=A\left(\frac1p+\frac1q-\frac13-\frac1\Delta\right)
-\Bigl((p-q)^2-(\Delta-3)^2\Bigr) \nonumber\\
&\ge A\left(\frac{2}{t}-\frac13-\frac1\Delta\right)
+\Bigl((\Delta-3)^2-(t-1)^2\Bigr).\\\nonumber
\end{eqnarray}

Since
\[
t=\left\lfloor\frac{\Delta+3}{2}\right\rfloor \le \frac{\Delta+3}{2},
\]
we have
\[
\frac{2}{t}\ge \frac{4}{\Delta+3}.
\]
Moreover, for $\Delta\ge 8$,
\[
\Delta-3>\frac{\Delta+1}{2}=\frac{\Delta+3}{2}-1\geq t-1
\quad\Longrightarrow\quad
(\Delta-3)^2>(t-1)^2.
\]
Thus, for $\Delta\ge 8$, the quadratic term
\[
(\Delta-3)^2-(t-1)^2
\]
is strictly positive and dominates the term
\[
A\left(\frac{4}{\Delta+3}-\frac13-\frac1\Delta\right),
\]
which is bounded below for all $\Delta\ge 4$. 
Therefore, the right-hand side of (\ref{in: dif_F})  is an increasing function, and a direct computation shows that it is positive at $\Delta =8$.
Hence
\[
F(p,q)-F(3,\Delta)\ge 0 \qquad (\Delta\ge 8).
\]

For the remaining cases $\Delta=4,5,6,7$, inequality
\eqref{eq:Fpq-lower-by-F3D} is verified directly by a finite inspection over
$1\le p\le q\le t$.
\end{proof}

The preceding lemmas allow us to determine the exact minimum of the penalty function when $n\equiv0\pmod{\Delta}$.

\begin{theorem}
\label{thm:Pmin-n0modDelta}
Let $\Delta\ge 4$ and let $T$ be a tree of order $n$ with maximum degree $\Delta$,
and let $P(T)$ be defined as in \eqref{def:P(T)}.
If $\Delta\mid n$, then
\[
P(T)\ge F(\Delta,\Delta).
\]
Moreover, equality holds if and only if $T$ satisfies
\[
m_{\Delta,\Delta}=1,\qquad
m_{1,\Delta}>0,\qquad
m_{2,\Delta}>0,\qquad
m_{i,j}=0\ \text{for all other }(i,j),
\]
in which case the parameters are uniquely determined by
\[
n_\Delta=\frac{n}{\Delta},\qquad
n_2=\frac{n}{\Delta}-2,\qquad
n_1=(\Delta-2)\frac{n}{\Delta}+2,
\]
\[
m_{1,\Delta}=n_1,\qquad
m_{2,\Delta}=2n_2,\qquad
m_{\Delta,\Delta}=1.
\]
\end{theorem}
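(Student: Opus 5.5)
The plan is to split according to which "bad" configuration $T$ contains, using that $P(T)=\sum F(i,j)m_{i,j}$ with every $F(i,j)\ge 0$ by Theorem~\ref{thm:SO-LP-1-Delta}. Put $t=\lfloor(\Delta+3)/2\rfloor$. I will use three exhaustive cases: (a) $m_{\Delta,\Delta}\ge1$; (b) $m_{\Delta,\Delta}=0$ and $T$ has a vertex of degree $i$ with $t<i\le\Delta-1$; (c) $m_{\Delta,\Delta}=0$ and $n_{i'}=0$ for all $t<i'\le\Delta-1$. Every term of $P(T)$ is nonnegative, so in each case it suffices to isolate a subfamily of edges whose contribution already reaches $F(\Delta,\Delta)$.

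\emph{Case (a).} Here $P(T)\ge F(\Delta,\Delta)\,m_{\Delta,\Delta}\ge F(\Delta,\Delta)$. Equality forces $m_{\Delta,\Delta}=1$. It also forces every other term with positive slack to vanish. By Theorem~\ref{thm:SO-LP-1-Delta}, the only zero slacks are at $(1,\Delta)$ and $(2,\Delta)$, so $m_{i,j}=0$ for all other pairs.

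\emph{Case (b).} Fix a vertex $v$ of degree $i$ with $t+1\le i\le\Delta-1$. Each of its $i$ incident edges joins it to a vertex of some degree $j\in\{1,\dots,\Delta\}$, and $F$ is symmetric. I need $F(i,j)\ge F(i,2)$ for every such $j$, including $j=\Delta$. Lemma~\ref{lem:optimal-pair-F} gives this for $j\le\Delta-1$. Its proof also verifies $s(\Delta)\ge 0$, which covers $j=\Delta$. The $i$ edges at $v$ are distinct, so
\[
P(T)\ge iF(i,2)>F(\Delta,\Delta)
\]
by Lemma~\ref{lem:iF-i2-dominates-DD}. Equality is therefore impossible in this case. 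Several such vertices cause no difficulty, since a single one suffices.

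\emph{Case (c).} Now every non-$\Delta$ vertex has degree at most $t$, $m_{\Delta,\Delta}=0$ and $\Delta\mid n$. So Lemma~\ref{lem:block-without-Delta} gives $E_{\le t}\ge\Delta-1$. By Lemma~\ref{lem:Fpq-lower-by-F3D}, each of these edges has slack at least $F(3,\Delta)>0$. Hence
\[
P(T)\ge(\Delta-1)F(3,\Delta)>F(\Delta,\Delta),
\]
using Lemma~\ref{lem:Delta-1FiDelta} at $i=3$, and equality is again impossible. This case is where $\Delta\mid n$ enters. It is the main obstacle: without the congruence argument, a tree using only $(1,\Delta)$ and $(2,\Delta)$ edges would have $P=0$. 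I will also remark that this rules out $P=0$, because then $m_{2,\Delta}=2(n-\Delta-1)/\Delta$ would have to be an integer, which fails when $\Delta\mid n$.

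\emph{Equality.} Equality can occur only in case (a), with $m_{\Delta,\Delta}=1$ and only $m_{1,\Delta},m_{2,\Delta}$ otherwise nonzero. I will then solve the identities \eqref{eq:tree-sum-ni}--\eqref{eq:tree-ni-Delta}. The edge count gives $m_{1,\Delta}+m_{2,\Delta}=n-2$. Substituting this into the $\Delta$-handshake gives $\Delta n_\Delta=m_{1,\Delta}+m_{2,\Delta}+2=n$, so $n_\Delta=n/\Delta$. Further, $n_1=m_{1,\Delta}$ and $2n_2=m_{2,\Delta}$. Combining $n_1+n_2=n-n/\Delta$ with $n_1+2n_2=n-2$ yields $n_2=n/\Delta-2$ and $n_1=(\Delta-2)n/\Delta+2$. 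Conversely, any tree with this edge distribution has $P(T)=F(\Delta,\Delta)$ by direct evaluation of \eqref{def:P(T)}.
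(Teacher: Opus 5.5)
Your proof is correct and is essentially the paper's argument: you use the same lemmas in the same roles (Lemmas~\ref{lem:optimal-pair-F} and~\ref{lem:iF-i2-dominates-DD} for a vertex of degree above $t$; Lemmas~\ref{lem:block-without-Delta}, \ref{lem:Fpq-lower-by-F3D} and~\ref{lem:Delta-1FiDelta} otherwise), organized as a direct trichotomy rather than a contradiction. Two small improvements come out of this: applying Lemma~\ref{lem:block-without-Delta} to all of case (c) makes the paper's separate reduction to degrees $\{1,2,\Delta\}$ and its non-integrality appeal unnecessary, and you correctly note that the comparison at $j=\Delta$ comes from the proof, not the statement, of Lemma~\ref{lem:optimal-pair-F}. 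One caveat is shared with the paper: $m_{2,\Delta}=2(n/\Delta-2)$ is positive only for $n\ge 3\Delta$, and for $n=2\Delta$ the equality tree is the double star with $m_{2,\Delta}=0$, so the clause $m_{2,\Delta}>0$ in the statement cannot be proved as written in that case.
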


\begin{proof}
Put $A:=4\Delta-6$ and $B:=\Delta^2-6\Delta+3+\frac{6}{\Delta}$, so that
\[
F(i,j)=A\left(\frac1i+\frac1j\right)+B-(i-j)^2,
\qquad
F(\Delta,\Delta)=\frac{2A}{\Delta}+B.
\]
Assume $\Delta\mid n$ and suppose, for contradiction, that
\begin{equation}
\label{eq:contraposition-new}
P(T)<F(\Delta,\Delta).
\end{equation}

\medskip
\noindent\emph{Elimination of the edge type $(\Delta,\Delta)$.}
Under \eqref{eq:contraposition-new} we must have
$m_{\Delta,\Delta}=0$, since a single $(\Delta,\Delta)$-edge already contributes
$F(\Delta,\Delta)$ to $P(T)$.

Let
\[
t:=\left\lfloor\frac{\Delta+3}{2}\right\rfloor.
\]

\medskip
\noindent\emph{Exclusion of high intermediate degrees.}
Suppose $n_i\ge 1$ for some $i\in\{t+1,\dots,\Delta-1\}$.
By Lemma~\ref{lem:optimal-pair-F}, for such $i$ the function $j\mapsto F(i,j)$
attains its minimum at $j=2$.
Since a vertex of degree $i$ is incident with $i$ edges,
\[
P(T)\ge i\,F(i,2).
\]
By Lemma~\ref{lem:iF-i2-dominates-DD}, we have
$iF(i,2)>F(\Delta,\Delta)$, contradicting
\eqref{eq:contraposition-new}.
Hence
\begin{equation}
\label{eq:elim-high-new}
n_i=0\qquad (t+1\le i\le \Delta-1).
\end{equation}

\medskip
\noindent\emph{Exclusion of degrees $3\le i\le t$.}
Assume next that $n_i\ge 1$ for some $3\le i\le t$.
By Lemma~\ref{lem:block-without-Delta},
\[
E_{\le t}:=\sum_{1\le p\le q\le t} m_{p,q}\ge \Delta-1.
\]
By Lemma~\ref{lem:Fpq-lower-by-F3D}, each such edge satisfies
$F(p,q)\ge F(3,\Delta)$.
Therefore
\[
P(T)\ge E_{\le t}\,F(3,\Delta)\ge (\Delta-1)\,F(3,\Delta).
\]
By Lemma~\ref{lem:Delta-1FiDelta},
\[
(\Delta-1)\,F(3,\Delta)>F(\Delta,\Delta),
\]
again contradicting \eqref{eq:contraposition-new}.
Thus
\begin{equation}
\label{eq:elim-mid-new}
n_i=0\qquad (3\le i\le t).
\end{equation}

\medskip
\noindent\emph{Reduction to degrees $\{1,2,\Delta\}$.}
From \eqref{eq:elim-high-new} and \eqref{eq:elim-mid-new}, all vertices have degree
in $\{1,2,\Delta\}$.
Since $m_{\Delta,\Delta}=0$, only the edge types
\[
(1,2),\ (1,\Delta),\ (2,2),\ (2,\Delta)
\]
may occur.

A direct computation shows
\[
F(2,2)>F(\Delta,\Delta),
\qquad
F(1,2)>F(\Delta,\Delta).
\]
Hence $m_{1,2}=m_{2,2}=0$, otherwise $P(T)>F(\Delta,\Delta)$.
Consequently only $(1,\Delta)$ and $(2,\Delta)$ edges remain, and therefore
\[
P(T)=m_{1,\Delta}F(1,\Delta)+m_{2,\Delta}F(2,\Delta)=0.
\]
By Theorem~\ref{thm:SO-LP-1-Delta}, this occurs only when
$n\equiv 1\pmod{\Delta}$, contradicting $\Delta\mid n$.
Thus \eqref{eq:contraposition-new} is impossible, and
\[
P(T)\ge F(\Delta,\Delta).
\]

\medskip
\noindent\emph{Equality case.}
Assume now that $P(T)=F(\Delta,\Delta)$.
Then $m_{\Delta,\Delta}\ge 1$, since otherwise the above argument would again
imply $P(T)=0$.
Because any additional $(\Delta,\Delta)$-edge would contribute at least
$F(\Delta,\Delta)$, we must have
\[
m_{\Delta,\Delta}=1.
\]
All strict inequalities used above force that no other edge types occur except
$(1,\Delta)$ and $(2,\Delta)$.

Under the support
\[
m_{\Delta,\Delta}=1,\qquad
m_{1,\Delta}>0,\qquad
m_{2,\Delta}>0,\qquad
m_{i,j}=0\ \text{otherwise},
\]
the handshake identities reduce to a linear system whose unique solution is
\[
n_\Delta=\frac{n}{\Delta},\qquad
n_2=\frac{n}{\Delta}-2,\qquad
n_1=(\Delta-2)\frac{n}{\Delta}+2,
\]
\[
m_{1,\Delta}=n_1,\qquad
m_{2,\Delta}=2n_2,\qquad
m_{\Delta,\Delta}=1.
\]
In particular, integrality requires $\Delta\mid n$, and the configuration is unique.
\end{proof}

\subsection{Explicit constructions of minimizing trees}

\medskip

Each of the preceding theorems provides a complete characterization of the
minimizing configurations in terms of the parameters $m_{i,j}$ and $n_i$.
To complement these algebraic descriptions, we now give explicit constructive
descriptions of all trees attaining the minimum penalty.

For each congruence class of $n$ modulo $\Delta$, we define a natural family of
trees and show that a tree $T$ with given order $n$ and maximum degree $\Delta$
attains the minimum possible value of $P(T)$ if and only if $T$ belongs to that
family, up to isomorphism.

\medskip

Let $k\ge 1$.
We denote by $TT^1_{\mathrm{opt}}(k)$ the tree obtained from the path
$P_{2k+1}=v_1v_2\cdots v_{2k+1}$ by attaching exactly $\Delta-2$ pendant vertices
to each even vertex $v_2,v_4,\dots,v_{2k}$.

\begin{lemma}
\label{lem:TT1opt}
Let $k\ge 1$ and set $n=\Delta k+1$.
Then the tree $TT^1_{\mathrm{opt}}(k)$ has order $n$ and maximum degree $\Delta\ge4$,
and satisfies
\[
P\bigl(TT^1_{\mathrm{opt}}(k)\bigr)=0.
\]
Moreover, a tree $T$ of order $n$ and maximum degree $\Delta$ satisfies $P(T)=0$
if and only if $T\cong TT^1_{\mathrm{opt}}(k)$.
\end{lemma}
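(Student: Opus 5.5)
We first verify the construction directly and then use Theorem~\ref{thm:SO-LP-1-Delta} together with complementary slackness to obtain the converse.

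\emph{The tree $TT^1_{\mathrm{opt}}(k)$ has the right parameters.} It has $2k+1$ path vertices and $k(\Delta-2)$ pendant vertices, so its order is $\Delta k+1=n$. Each even vertex $v_{2s}$ has two path neighbours and $\Delta-2$ pendants, hence degree $\Delta$. The interior odd vertices $v_3,\dots,v_{2k-1}$ have degree $2$, and $v_1$, $v_{2k+1}$ and all pendants are leaves. Every edge joins an even vertex to either a leaf or an interior odd vertex. Hence only the types $(1,\Delta)$ and $(2,\Delta)$ occur, with
\[
m_{1,\Delta}=k(\Delta-2)+2,\qquad m_{2,\Delta}=2(k-1).
\]
Since $F(1,\Delta)=F(2,\Delta)=0$, definition \eqref{def:P(T)} gives $P=0$. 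As a sanity check, these values agree with the formulas of Theorem~\ref{thm:SO-LP-1-Delta} at $n=\Delta k+1$.

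\emph{$P(T)=0$ forces the degree-pair data.} Let $T$ have order $n$, maximum degree $\Delta$, and $P(T)=0$. The proof of Theorem~\ref{thm:SO-LP-1-Delta} shows $F(i,j)>0$ for every $(i,j)\notin\{(1,\Delta),(2,\Delta)\}$, and every $m_{i,j}\ge0$. So $P(T)=0$ forces $m_{i,j}=0$ off these two types. The reduced system \eqref{eq: reduced system} then fixes $m_{1,\Delta}$ and $m_{2,\Delta}$ uniquely. By the handshake identities this gives $n_i=0$ for $i\notin\{1,2,\Delta\}$, $n_\Delta=k$, $n_2=k-1$, and $n_1=k(\Delta-2)+2$. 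Structurally:
\begin{itemize}
\item no two $\Delta$-vertices are adjacent;
\item every degree-$2$ vertex has both neighbours of degree $\Delta$;
\item every leaf hangs on a $\Delta$-vertex.
\end{itemize}

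\emph{The isomorphism claim, which is the main obstacle.} Delete all leaves and suppress the degree-$2$ vertices, i.e.\ replace each path $u\,w\,u'$ through a degree-$2$ vertex $w$ by an edge $uu'$. This yields a tree $S$ on the $k$ vertices of degree $\Delta$. It is connected and acyclic because $T$ is, and it has $k-1$ edges, matching $n_2=k-1$. Each vertex of $S$ has degree at most $\Delta$ in $S$ and receives $\Delta-\deg_S$ pendant leaves in $T$. Conversely, every tree $S$ on $k$ vertices with maximum degree at most $\Delta$ produces, by subdividing each edge once and padding with leaves, a tree with $P=0$. When $S$ is the path $P_k$ we recover exactly $TT^1_{\mathrm{opt}}(k)$.

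Consequently, the ``only if'' direction holds literally up to isomorphism only when every tree on $k$ vertices is a path, i.e.\ $k\le3$. For $k\ge4$, since $\Delta\ge4$, the star $K_{1,3}$ on $4$ of the $\Delta$-vertices already gives a non-isomorphic tree with $P=0$. The argument therefore establishes the following: uniqueness of the degree-pair configuration $(m_{i,j})$ and $(n_i)$ in all cases; the tree $TT^1_{\mathrm{opt}}(k)$ as a representative; and a characterisation of all minimisers as ``$1$-subdivisions of trees $S$ on $k$ vertices with $\Delta(S)\le\Delta$, padded with leaves.'' The isomorphism clause should be read in that sense, with literal uniqueness for $k\le3$.
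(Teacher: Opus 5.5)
Your analysis is correct. The problem lies with the statement, not with your argument: the ``only if'' clause fails for every $k\ge4$. Concretely, take $\Delta=4$, $k=4$, $n=17$. Join a $4$-vertex $c$ through three degree-$2$ vertices to $4$-vertices $a,b,d$, then add one pendant at $c$ and three at each of $a,b,d$. This gives $n_1=10$, $n_2=3$, $n_4=4$, $m_{1,4}=10$, $m_{2,4}=6$, hence $P=0$. Yet $c$ has three degree-$2$ neighbours, while every $\Delta$-vertex of $TT^1_{\mathrm{opt}}(4)$ has at most two. Your first two steps match what the paper does: the direct verification, and $P(T)=0$ forcing the support $\{(1,\Delta),(2,\Delta)\}$ and hence unique $(m_{i,j})$ and $(n_i)$, via the strict positivity of $F$ shown in the proof of Theorem~\ref{thm:SO-LP-1-Delta}.

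For the structural part the paper proceeds by induction on $k$, and this is where its proof fails.
\begin{itemize}
\item It takes an end vertex $u$ of a longest path; the neighbour $v$ of $u$ is a $\Delta$-vertex with $\Delta-1$ leaf neighbours and one degree-$2$ neighbour $w$.
\item Deleting $v$ with its leaves gives $T'$ with $P(T')=0$, so $T'\cong TT^1_{\mathrm{opt}}(k)$ by the induction hypothesis.
\item It then asserts that reattaching $v$ ``in the unique admissible way'' gives $TT^1_{\mathrm{opt}}(k+1)$.
\end{itemize}
That last assertion is the error. The vertex $w$ is just some leaf of $T'$, and under the isomorphism it may be a pendant of an interior spine vertex $v_{2s}$ rather than $v_1$ or $v_{2k+1}$. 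Attaching at a pendant of $v_4$ in $TT^1_{\mathrm{opt}}(3)$ produces exactly your $K_{1,3}$-skeleton tree.

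Your route (delete the leaves, suppress the degree-$2$ vertices) avoids the induction and gives the correct characterization. Trees of order $\Delta k+1$ with $P(T)=0$ correspond, up to isomorphism, to trees $S$ on $k$ vertices with maximum degree at most $\Delta$, and the path $P_k$ gives $TT^1_{\mathrm{opt}}(k)$. So uniqueness holds only for $k\le3$. What survives from the paper is the uniqueness of the degree-pair data, and therefore the value of $\max\sigma$. The converse in Lemma~\ref{lem:TT0opt} invokes this lemma and is affected in the same way.
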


\begin{proof}
By construction, the tree $TT^1_{\mathrm{opt}}(k)$ is obtained from the path
$P_{2k+1}$ by attaching exactly $\Delta-2$ pendant vertices to each even vertex.
Hence it has
\[
n_1 = k(\Delta-2)+2,\qquad
n_2 = k-1,\qquad
n_\Delta = k,
\]
and no vertices of any other degree. 
Moreover, all edges are of types $(1,\Delta)$ or $(2,\Delta)$, so by the
definition of the penalty function and complementary slackness we obtain
$P\bigl(TT^1_{\mathrm{opt}}(k)\bigr)=0$.

We now prove the converse statement.
Let $T$ be a tree of order $n=\Delta k+1$ and maximum degree $\Delta$ such that
$P(T)=0$.
Then, by Theorem~\ref{thm:SO-LP-1-Delta}, the only nonzero edge--degree
multiplicities are
\[
m_{1,\Delta}=k(\Delta-2)+2,\qquad
m_{2,\Delta}=2k-2,
\]
and all other $m_{i,j}$ vanish.
Consequently, $T$ has the same degree sequence as $TT^1_{\mathrm{opt}}(k)$,
namely
\[
n_1=k(\Delta-2)+2,\qquad n_2=k-1,\qquad n_\Delta=k.
\]

We prove that $T\cong TT^1_{\mathrm{opt}}(k)$ by induction on $k$.

\smallskip\noindent
\emph{Base case $k=1$.}
In this case, $n=\Delta+1$ and the degree sequence reduces to
$n_1=\Delta$, $n_\Delta=1$.
Hence $T$ is the star $S_\Delta$, which coincides with $TT^1_{\mathrm{opt}}(1)$.

\medskip\noindent
\emph{Induction step.}
Assume that the statement holds for some $k\ge1$, and let $T$ be a tree of order
$n=\Delta(k+1)+1$ satisfying $P(T)=0$.
Consider a longest path in $T$ and let $u$ be one of its endpoints.
Then $u$ is a leaf, and its unique neighbor $v$ has degree $\Delta$.

Among the $\Delta-1$ remaining neighbors of $v$, exactly $\Delta-2$ must be
leaves.
Indeed, if at least two of these neighbors had degree at least $2$, then the
path could be extended beyond $u$, contradicting the maximality of the chosen
path.
Therefore, $v$ has exactly $\Delta-2$ leaf neighbors and exactly one neighbor of
degree~$2$.

Remove the vertex $v$ together with all its leaf neighbors (there are $\Delta-1$
of them, including $u$). 
The resulting graph $T'$ is a tree of order $\Delta k+1$.
A direct inspection shows that this operation reduces the number of edges of
type $(1,\Delta)$ by $\Delta-2$ and the number of edges of type $(2,\Delta)$ by
$2$, while all other edge--degree multiplicities remain zero.
Thus $T'$ has the degree sequence and edge--degree distribution corresponding to
$TT^1_{\mathrm{opt}}(k)$, and hence satisfies $P(T')=0$.

By the induction hypothesis, $T'\cong TT^1_{\mathrm{opt}}(k)$.
Reattaching the vertex $v$ together with its $\Delta-2$ pendant neighbors in the
unique admissible way reconstructs the tree $TT^1_{\mathrm{opt}}(k+1)$.
Therefore $T\cong TT^1_{\mathrm{opt}}(k+1)$.

This completes the induction and the proof.
\end{proof}

Let $k\ge 1$ and set $n=\Delta k+\Delta$.
We denote by $TT^0_{\mathrm{opt}}(k)$ the family of trees obtained from
$TT^1_{\mathrm{opt}}(k)$ by subdividing exactly one edge $v_iv_{i+1}$ with odd
$i$, where $3\le i\le 2k-1$, and then attaching $\Delta-2$ pendant vertices to the
new subdivision vertex.

\begin{lemma}
\label{lem:TT0opt}
Let $k\ge 1$ and set $n=\Delta k+\Delta$.
Then a tree $T$ of order $n$ and maximum degree $\Delta$ satisfies
\[
P(T)=F(\Delta,\Delta)
\]
if and only if $T\in TT^0_{\mathrm{opt}}(k)$.
\end{lemma}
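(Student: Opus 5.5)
Both directions rest on Theorem~\ref{thm:Pmin-n0modDelta}, which turns the condition $P(T)=F(\Delta,\Delta)$ into a statement about degree data. By that theorem, a tree of order $n=\Delta(k+1)$ satisfies $P(T)=F(\Delta,\Delta)$ if and only if its only edge types are $(1,\Delta)$, $(2,\Delta)$ and exactly one $(\Delta,\Delta)$-edge, with $n_\Delta=k+1$, $n_2=k-1$ and $n_1=(\Delta-2)(k+1)+2$. So the lemma reduces to a purely structural claim: the trees with exactly this degree and edge-type data are exactly the members of $TT^0_{\mathrm{opt}}(k)$.

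\emph{Sufficiency.} Take $T\in TT^0_{\mathrm{opt}}(k)$ and count directly.
\begin{itemize}
\item Subdividing $v_iv_{i+1}$ by a new vertex $w$ and attaching $\Delta-2$ pendants to $w$ gives $\deg w=\Delta$ and adds $\Delta-1$ vertices, so the order is $\Delta k+1+\Delta-1=n$.
\item For odd $i$ with $3\le i\le 2k-1$, one of $v_i,v_{i+1}$ is an even path vertex, hence of degree $\Delta$. The other is an odd interior vertex, hence of degree $2$. The restriction $i\ge3$ (and $i\le 2k-1$) guarantees that this odd vertex is not an endpoint $v_1$ or $v_{2k+1}$; otherwise it would be a leaf and would create a $(1,\Delta)$ edge rather than a $(2,\Delta)$ edge. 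I will check this boundary behaviour carefully.
\item After subdivision, the odd vertex becomes adjacent to $w$ instead of to its even neighbour, so its degree stays $2$.
\item The new edges are one $(\Delta,\Delta)$-edge (between $w$ and the even vertex), one $(2,\Delta)$-edge (between $w$ and the odd vertex), and $\Delta-2$ edges of type $(1,\Delta)$.
\end{itemize}
Hence the support is exactly $\{(1,\Delta),(2,\Delta),(\Delta,\Delta)\}$ with $m_{\Delta,\Delta}=1$. Since $F(1,\Delta)=F(2,\Delta)=0$, this gives $P(T)=F(\Delta,\Delta)$.

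\emph{Necessity.} Let $T$ have the data above, and let $xy$ be its unique $(\Delta,\Delta)$-edge. Deleting $xy$ leaves two components $T_x\ni x$ and $T_y\ni y$.
\begin{itemize}
\item In each component, $x$ (respectively $y$) has degree $\Delta-1$. Every other degree-$\Delta$ vertex still has degree $\Delta$, because $xy$ was the only edge joining two $\Delta$-vertices. The non-$\Delta$ vertices keep their degrees.
\item Each component can be made a tree with only $(1,\Delta)$ and $(2,\Delta)$ edges by adding one pendant leaf to $x$ (respectively $y$). By Lemma~\ref{lem:TT1opt}, or by the same degree bookkeeping as in Theorem~\ref{thm:SO-LP-1-Delta}, each completed component is isomorphic to some $TT^1_{\mathrm{opt}}(a)$ or $TT^1_{\mathrm{opt}}(b)$, with $a+b=k+1$ and $a,b\ge1$.
\item Thus $x$ is an even path vertex of a caterpillar $TT^1_{\mathrm{opt}}(a)$ with one of its pendants removed. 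The same holds for $y$ in $TT^1_{\mathrm{opt}}(b)$.
\end{itemize}
I also need to rule out $x$ or $y$ being a $\Delta$-vertex whose removed neighbour is a degree-2 spine vertex. This is impossible: in that case a degree-$2$ vertex would lose a neighbour and become a leaf adjacent to a $\Delta$-vertex, which is consistent with the data only if it is counted as a leaf. I will handle this by arguing with the completion directly, noting that the added pendant leaf is what restores $x$ to degree $\Delta$.

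\emph{The main obstacle} is the final identification: showing that joining two such caterpillars by an edge $xy$ yields a member of $TT^0_{\mathrm{opt}}(k)$ exactly when no degree-$2$ vertex is created or destroyed. In particular, I must show that the counts force $x$ and $y$ both to lie at the ends of their spines, so that the result is a single spine of the required form with $xy$ playing the role of the edge between $w$ and an even vertex.

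I would try to avoid delicate case analysis by reusing the longest-path induction of Lemma~\ref{lem:TT1opt}.
\begin{itemize}
\item Take a longest path in $T$. Its endpoint neighbour $v$ has degree $\Delta$, with $\Delta-2$ leaf neighbours and one further neighbour. That further neighbour is either of degree $2$ or, in the exceptional case, is the other end of the $(\Delta,\Delta)$-edge.
\item In the generic case, strip $v$ together with its leaves. This produces a tree with the same data for $k-1$, and we apply induction, with the base case $k=1$ checked separately.
\item The exceptional case is $xy$ lying at the end of a longest path. It must be shown to correspond to the subdivision position $i=2k-1$ (or symmetrically $i=3$), and then checked directly against the definition of $TT^0_{\mathrm{opt}}(k)$.
\end{itemize}
The base cases and the matching of the allowed positions $3\le i\le 2k-1$ are where I expect the most care to be needed.
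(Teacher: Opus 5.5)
Your reduction to degree data via Theorem~\ref{thm:Pmin-n0modDelta} is valid, and so is your sufficiency count. The necessity direction, however, cannot be completed. The step you call the main obstacle, that ``the counts force $x$ and $y$ both to lie at the ends of their spines'', is false.

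The data $n_\Delta=k+1$, $n_2=k-1$, $m_{\Delta,\Delta}=1$ only tell you the following. Deleting the leaves of $T$ leaves a tree obtained from an \emph{arbitrary} tree on the $k+1$ degree-$\Delta$ vertices (of maximum degree at most $\Delta$) by subdividing every edge but one.

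Here is a counterexample for $k=3$. Let $x$ be adjacent to a vertex $y$, to two vertices $p,q$, and to $\Delta-3$ leaves. Join $p$ to a vertex $r$ and $q$ to a vertex $r'$, and give each of $y,r,r'$ exactly $\Delta-1$ pendant leaves. This tree has order $4\Delta$, $n_\Delta=4$, $n_2=2$, a single $(\Delta,\Delta)$-edge $xy$, and otherwise only $(1,\Delta)$- and $(2,\Delta)$-edges. Hence $P(T)=F(\Delta,\Delta)$. Yet after deleting its leaves, $x$ still has degree $3$, whereas every member of $TT^0_{\mathrm{opt}}(k)$ becomes a path. So $T\notin TT^0_{\mathrm{opt}}(3)$, and lengthening an arm gives counterexamples for every $k\ge3$.

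In your decomposition, $T_y$ completes to $TT^1_{\mathrm{opt}}(1)$ and $T_x$ completes to $TT^1_{\mathrm{opt}}(3)$, with $x$ its \emph{middle} spine vertex. Nothing in the counts excludes this. Three further parts of your plan fail for related reasons:
\begin{itemize}
\item Your longest-path induction fails the same way. The degree-$2$ neighbour of the stripped vertex becomes a leaf of the smaller tree, and that leaf may be a pendant of an interior spine vertex, so the reattachment is not unique.
\item The same defect makes Lemma~\ref{lem:TT1opt}, which you cite, false for $k\ge4$: subdividing a $K_{1,3}$ skeleton and padding with leaves gives $P=0$.
\item Your base case fails. $TT^0_{\mathrm{opt}}(1)$ is empty, since there is no odd $i$ with $3\le i\le 1$, yet the double star of order $2\Delta$ has $P=F(\Delta,\Delta)$.
\end{itemize}

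The paper's proof takes a parallel route: it suppresses an endpoint of the $(\Delta,\Delta)$-edge having exactly one degree-$2$ neighbour, then invokes Lemma~\ref{lem:TT1opt}. It breaks at the same point. In the example above, $x$ has two degree-$2$ neighbours and $y$ has none, so no such endpoint exists.

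No argument can close your gap, because the statement as written holds only for $k=2$. What is true is the following. The trees of order $\Delta(k+1)$ with $P(T)=F(\Delta,\Delta)$ are exactly those obtained from a tree on $k+1$ vertices of maximum degree at most $\Delta$ by choosing one edge, subdividing every other edge once, and padding each original vertex with leaves up to degree $\Delta$. This follows directly by deleting leaves and suppressing the degree-$2$ vertices.
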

\begin{proof}
Fix $k\ge1$ and put $n=\Delta k+\Delta$.
By Theorem~\ref{thm:Pmin-n0modDelta}, a tree $T$ of order $n$ and maximum degree
$\Delta$ satisfies $P(T)=F(\Delta,\Delta)$ if and only if
\[
m_{\Delta,\Delta}=1,\qquad
m_{1,\Delta}=(\Delta-2)k+\Delta,\qquad
m_{2,\Delta}=2k-2,
\]
and all other edge--degree multiplicities vanish.
Equivalently,
\[
n_\Delta=k+1,\qquad
n_2=k-1,\qquad
n_1=(\Delta-2)k+\Delta.
\]

\medskip\noindent

Let $T\in TT^0_{\mathrm{opt}}(k)$.
By construction, $T$ is obtained from $TT^1_{\mathrm{opt}}(k)$ by subdividing one
edge of type $(2,\Delta)$ on the defining path and attaching $\Delta-2$ pendant
vertices to the new subdivision vertex.
This operation increases the number of degree--$\Delta$ vertices by one, creates
exactly one edge of type $(\Delta,\Delta)$, increases the number of edges of type
$(1,\Delta)$ by $\Delta-2$, and leaves the number of edges of type $(2,\Delta)$
unchanged.
All remaining edges stay of type $(1,\Delta)$ or $(2,\Delta)$.
A direct count yields the above parameter values, and hence
$P(T)=F(\Delta,\Delta)$.

\medskip\noindent

Conversely, let $T$ satisfy $P(T)=F(\Delta,\Delta)$.
Then $T$ contains a unique edge of type $(\Delta,\Delta)$.
By Theorem~\ref{thm:Pmin-n0modDelta}, $m_{2,\Delta}>0$ and
$m_{2,i}=0$, for $i \neq \Delta$, so every $2$-vertex lies on a $\Delta$--$2$--$\Delta$ path.
Since $n_2=k-1$, these vertices connect all $\Delta$-vertices except for one
additional adjacency, which must be the unique $(\Delta,\Delta)$-edge.
Hence one endpoint $s$ of this edge has exactly one $2$-neighbor and therefore
exactly $\Delta-2$ pendant neighbors.
Let $s$ be an endpoint of this edge that is adjacent to $\Delta-2$ leaves.
Remove these $\Delta-2$ pendant vertices and suppress $s$, that is, delete $s$ and
replace its two remaining incident edges by a single edge.
The resulting tree $T^\circ$ has order $\Delta k+1$, maximum degree $\Delta$, and
contains only edges of type $(1,\Delta)$ and $(2,\Delta)$.
Thus $P(T^\circ)=0$, and by Lemma~\ref{lem:TT1opt} we obtain
\[
T^\circ \cong TT^1_{\mathrm{opt}}(k).
\]

Reversing the suppression shows that $T$ is obtained from
$TT^1_{\mathrm{opt}}(k)$ by subdividing exactly one $(2,\Delta)$-edge on the
defining path and attaching $\Delta-2$ pendant vertices to the new vertex.
Hence $T\in TT^0_{\mathrm{opt}}(k)$.
\end{proof}

Together, Lemmas~\ref{lem:TT1opt} and~\ref{lem:TT0opt} give explicit constructions
of all trees attaining the extremal values determined by the linear programming
and penalty-function analysis.

\section{Conclusion}

In this paper we investigated the problem of maximizing the
$\sigma$-irregularity index among trees of fixed order $n$ and prescribed maximum
degree $\Delta\ge4$.
By reformulating the extremal problem as a linear optimization problem over
edge--degree multiplicities and exploiting its dual, we derived sharp upper
bounds for $\sigma(T)$ and obtained explicit structural characterizations of all
extremal trees in the cases $n\equiv1\pmod{\Delta}$ and $n\equiv0\pmod{\Delta}$.
In both congruence classes, the extremal trees are composed exclusively of
vertices of degrees $1$, $2$, and $\Delta$, with edges of types $(1,\Delta)$ and
$(2,\Delta)$ playing a dominant role.

Our results indicate that extending the present analysis to arbitrary values of
$n$ and $\Delta$ may require substantially more involved case distinctions,
although the approach remains effective for small values of $\Delta$.
As $\Delta$ increases, the number of residue classes modulo $\Delta$ grows, and
so does the minimum value of the associated penalty function.
Preliminary evidence, supported by computer-assisted exhaustive searches,
suggests that for residue classes $n=\Delta k+r$ with $|r|$ close to
$\lfloor\Delta/2\rfloor$, extremal trees may involve vertices of additional
intermediate degrees.
In such cases, new structural patterns emerge that are not captured by the
degree sets $\{1,2,\Delta\}$.

While the penalty-based optimization framework developed in this paper is
effective for configurations with small penalty values, it remains an open
problem whether all $\sigma$-maximal trees for larger penalties can be
characterized using a purely case-based analysis, even when supplemented by
computer search.
Addressing this question will likely require new ideas that combine optimization
methods with deeper combinatorial insights into the global structure of trees
with highly heterogeneous degree distributions.


\begin{thebibliography}{99}







\bibitem{Abdo2014}
H.~Abdo, N.~Cohen, and D.~Dimitrov,  
Graphs with maximal irregularity,  
\emph{Filomat}, 28 (2014), 1315--1322.

\bibitem{Abdo2018}
H.~Abdo, D.~Dimitrov, and I.~Gutman,  
Graphs with maximal $\sigma$-irregularity,  
\emph{Discrete Appl. Math.}, 250 (2018), 57--64.

\bibitem{Albertson1997}
M.~O.~Albertson,  
The irregularity of a graph,  
\emph{Ars Combin.}, 46 (1997), 219--225.

\bibitem{Arif2023}
S.~Arif, K.~Hayat, and S.~Khan,  
Spectral bounds for irregularity indices and their applications in QSPR modeling,  
\emph{J. Appl. Math. Comput.}, 72 (2023), 6351--6373.

\bibitem{Dimitrov2026}
D.~Dimitrov, Ž.~Kovijanić Vukićević, G.~Popivoda, J.~Sedlar, R.~Škrekovski, and S.~Vujošević,  
The $\sigma$-irregularity of trees with maximum degree 5,  
\emph{Discrete Appl. Math.}, 382 (2026), 124--136.

\bibitem{Estrada2010}
E.~Estrada,  
Quantifying network heterogeneity,  
\emph{Phys. Rev. E}, 82 (2010), 066102.

\bibitem{Gutman2005}
I.~Gutman, P.~Hansen, and H.~Mélot,  
Variable neighborhood search for extremal graphs. 10. Comparison of irregularity indices for chemical trees,  
\emph{J. Chem. Inf. Model.}, 45 (2005), 222--230.

\bibitem{Gutman2018}
I.~Gutman, M.~Togan, A.~Yurttaş, A.~S.~\c{C}evik, and I.~N.~Cangül,  
Inverse problem for sigma index,  
\emph{MATCH Commun. Math. Comput. Chem.}, 79 (2018), 491--508.

\bibitem{Hansen2005}
P.~Hansen and H.~Mélot,  
Variable neighborhood search for extremal graphs. 9. Bounding the irregularity of a graph,  
in: \emph{DIMACS Ser. Discrete Math. Theor. Comput. Sci.}, Vol.~69, 2005, pp.~253--264.

\bibitem{Kovijanic2024}
Ž.~Kovijanić Vukićević, G.~Popivoda, S.~Vujošević, R.~Škrekovski, and D.~Dimitrov,  
The $\sigma$-irregularity of chemical trees,  
\emph{MATCH Commun. Math. Comput. Chem.}, 91 (2024), 267--282.

\bibitem{Reti2018QSPR}
T.~Réti, R.~Sharafdini, Á.~Drégelyi-Kiss, and H.~Haghbin,  
Graph irregularity indices used as molecular descriptors in QSPR studies,  
\emph{MATCH Commun. Math. Comput. Chem.}, 79 (2018), 509--524.

\bibitem{Reti2019}
T.~Réti,  
On some properties of graph irregularity indices with a particular regard to the $\sigma$-index,  
\emph{Appl. Math. Comput.}, 344 (2019), 107--115.

\bibitem{Snijders1981}
T.~A.~B.~Snijders,  
The degree variance: an index of graph heterogeneity,  
\emph{Social Networks}, 3 (1981), 163--174.

\end{thebibliography}
\end{document}